\theoremstyle{plain}
\newtheorem{theorem}{Theorem}[section]
\newtheorem{proposition}[theorem]{Proposition}
\newtheorem{lemma}[theorem]{Lemma}
\newtheorem{corollary}[theorem]{Corollary}
\theoremstyle{definition}
\theoremstyle{remark}
\def\eop{$\rule{1.3ex}{1.3ex}$}
\renewcommand\qedsymbol\eop  
\numberwithin{equation}{section}
\DeclareMathOperator\proj{Proj}
\DeclareMathOperator\prox{prox}
\DeclareMathOperator\dom{dom}
\newcommand{\argmin}[1]{\underset{#1}{\operatorname{argmin}}}
\newcommand{\beq}{\begin{equation}} \newcommand{\eeq}{\end{equation}}
\newcommand{\bi}{\begin{itemize}}
\newcommand{\be}{\begin{enumerate}}
\newcommand{\ei}{\end{itemize}}
\newcommand{\ee}{\end{enumerate}}
\newcommand{\R}{{\mathbb R}}
\newcommand{\N}{{\mathbb N}}
\newcommand{\calA}{{\cal A}}
\newcommand{\lb}{{\langle}}
\newcommand{\rb}{{\rangle}}
\newcommand{\Sb}{{\bf S}}
\def\boldf#1{\hbox{\rlap{$#1$}\kern.4pt{$#1$}}}
\newcommand{\trans}{^{\scriptscriptstyle \top}}
\newcommand{\rd}{\R^d}
\newcommand{\phii}{\varphi}
\newcommand{\scO}{\mathcal{O}}
\newcommand{\moreau}[2]{{#1}_{#2}}
\newcommand{\betaphii}{\moreau{\phii}{\beta}}
\newcommand{\betaphiiprime}{\moreau{\phii}{\beta'}}
\begin{document} 

\title{PRISMA: PRoximal Iterative SMoothing Algorithm}

\author{
Francesco Orabona \\
Toyota Technological Institute at Chicago\\
\texttt{francesco@orabona.com} \\
\and
Andreas Argyriou\\
Katholieke Universiteit Leuven \\
\texttt{andreas.argyriou@esat.kuleuven.be} \\
\and
Nathan Srebro\\
Toyota Technological Institute at Chicago\\
\texttt{nati@ttic.edu}
}

\maketitle

\newcommand{\nati}[1]{{\color{red}\bf [[ {#1} -Nati ]]}}
\newcommand{\removedbynati}[1]{}

\newcommand{\fix}{\marginpar{FIX}}
\newcommand{\new}{\marginpar{NEW}}

\begin{abstract}
Motivated by learning problems including max-norm regularized matrix
completion and clustering, robust PCA and sparse inverse covariance selection, we propose a
novel optimization algorithm for minimizing a convex objective which
decomposes into three parts: a smooth part, a simple non-smooth
Lipschitz part, and a simple non-smooth non-Lipschitz part.  
We use a time variant smoothing strategy that 
allows us to obtain a guarantee that does not depend on knowing
in advance the total number of iterations nor a bound on the domain.
\end{abstract}

\section{Introduction}
\label{sec:intro}

We propose an optimization method (PRISMA --- PRoximal Iterative
SMoothing Algorithm) for problems of the form\footnote{All the
  theorems hold also in general Hilbert spaces, but for
  simplicity of exposition we consider a Euclidean setting.}:
\begin{equation}
\min\left\{
F(x):=f(x) + g(x) + h(x) : x \in \R^n
\right\}
\label{eq:opt}
\end{equation}
where:
\begin{itemize}
\item $f:\R^n\rightarrow\R$ is a convex and $L_f$-smooth function, that is,
  differentiable with a Lipschitz continuous gradient: $ \|\nabla f(x)
  - \nabla f(y) \| \leq L_f \, \|x-y\|  \quad\quad \forall x,y \in \R^n$.
\item $g:\R^n\rightarrow\R$ is a convex $\rho_g$-Lipschitz continuous function:
  $|f(x)-f(y)|\leq \rho_g \|x-y\|$.
\item $h:\R^n\rightarrow\R\cup\{+\infty\}$ is a proper, lower semicontinuous, convex (but possibly
  non-continuous/non-finite) function. For example, $h$ could be an indicator
  function for a convex constraint.
\end{itemize}
We further assume that we can calculate gradients of $f$, and that $g$
and $h$ are ``simple'' in the sense that we can calculate their {\em
  proximity operators}\footnote{For
  more clarity here we explicitly define the proximity operator as a
  function of $\alpha$ too because our algorithm heavily depends on
  this parameter changing over time.}:
\begin{align}
\prox_g(x,\alpha)=\argmin{u} \left\{ \frac{\|x-u\|^2}{2 \alpha} + g(u) :u\in\R^n \right\},
\end{align}
where $x \in \R^n$, $\alpha\in\R_{++}$, and similarly for $h$.  That is, our method can be
viewed as a black-box one, which accesses $f,g$ and $h$ only
through the gradients of $f$ and the proximity operators of $g$ and
$h$. 
Each iteration of PRISMA requires one evaluation of each of $\nabla f$,
$\prox_g$ and $\prox_h$ and a few vector operations of overall
complexity $O(n)$, and after $k$ such iterations we have
\begin{align}\label{eq:introbound}
F(x_{k+1}) - F(x^*) = \scO\left(\frac{L_f}{k^2} +  \frac{\rho_g \log k}{k}\right)~.
\end{align}

\paragraph{Applications.} Our main
motivation for developing PRISMA was for solving optimization problems
involving the matrix max-norm (aka
$\gamma_{2:\ell_1\rightarrow\ell_\infty}$ norm).  The max-norm has
recently been shown to possess some advantages over the more commonly
used trace-norm \cite{lee_maxnorm,jalali_maxnorm,RinaNatiCOLT}, but
is lagging behind it in terms of development of good
optimization approaches---a gap we aim to narrow in this paper.  Both
norms are SDP representable, but standard SDP solvers are not
applicable beyond very small scale problems. Instead, several
first-order optimization approaches are available and commonly used
for the trace-norm, including singular value thresholding \cite{cai} and
other \cite{jaggi2010simple,ma,TomiokaSSK10}. However, until now there have been no practical
first-order optimization techniques available for the max-norm. 
In Sec. \ref{sec:app} we show how max-norm regularized problems can be
written in the form \eqref{eq:opt} and solved using PRISMA, thus
providing for the first time an efficient and practical first order
method for max-norm optimization.  

We also demonstrate how PRISMA can be applied also to other
optimization problems with complex structure, including robust
principal component analysis (robust PCA), sparse inverse covariance
selection and basis pursuit.  In particular, for robust PCA, we show
how PRISMA yields better performance then previously published
approaches, and for basis pursuit we obtain the best known convergence
rate using only first-order and proximity oracles.

\textbf{Our Approach.}
Following ideas of Nesterov
\cite{nesterov2005smooth} and others, we propose to smooth the function $g$
and use its proximity operator to obtain gradients of its smoothed
version. However, unlike \citet{nesterov2005smooth}, where a fixed
amount of smoothing is used throughout the run, we show
how to gradually change the amount of smoothing at each iteration.
We can then use an accelerated gradient descent approach on
$f$ plus the smoothed version of $g$, as in
\citet{nesterov2005smooth}.  We also use the ideas of partial linearization to
handle the component $h$: instead of attempting to linearize it as
in gradient descent, we include it as is in each iteration, as in
FOBOS \cite{duchi} and ISTA/FISTA \cite{fista}.
The gradual smoothing allows us to obtain a guarantee that does
not depend on knowing in advance the total number of iterations,
needed in \citet{nesterov2005smooth}, nor a bound on the domain, needed
in \citet{nesterov2005smooth,ChambolleP11}, and paying only an additional $\log(k)$ factor.


{\bf Notation.} We use $f^*$ to denote the Fenchel conjugate,
$\delta_Q$ the indicator of the set $Q$ (zero inside $Q$ and infinite
outside), $\Sb^n_{++}$ the set of $n \times n$ positive definite
matrices, and $\Sb^n_{+}$ the set of p.s.d. ones.

\section{Smoothing of Nonsmooth Functions}
\label{sec:smoothing}

As was suggested by \citet{Nesterov05,nesterov2005smooth} and
others, in order to handle the non-smooth component $g$, we 
approximate it using a smooth function.  Such a smoothing plays a
central role in our method, and we devote this section to carefully
presenting it.  In particular, we use the $\beta$-{\em Moreau
  envelope}~\cite{combettes_book}, known also as {\em Moreau-Yosida
  regularization}. For a function $\phii:\R^n \to \R\cup\{+\infty\}$,
the Moreau envelope $\betaphii$, where $\beta > 0$, is defined as
\begin{equation}
{\betaphii}(x) := \inf \left\{ \frac{\|x-u\|^2}{2\beta} + \phii(u) :u\in\R^n \right\} \  \forall x \in \R^n~.
\label{eq:moreau_env}
\end{equation}
The function ${\betaphii}$ is a smooth approximation to $\phii$ (in
fact, the best possible approximation of bounded smoothness), as summarized in
the following Lemmas:
\begin{lemma}[Proposition 12.29 in \citet{combettes_book}]
\label{lem:moreau_props}
Let $\phii:\R^n \to \R$ be a proper, lower semicontinuous, convex function and $\beta > 0$. Then
${\betaphii}$ is $\frac{1}{\beta}$-smooth and its gradient can be obtained from the proximity operator of $\phii$ as: $\nabla (\betaphii)(x) = \frac{1}{\beta} (x - \prox_{\phii}(x,\beta))$.
\end{lemma}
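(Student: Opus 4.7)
The plan is to first observe that the inner optimization in \eqref{eq:moreau_env} has a unique minimizer, then derive the stated gradient formula from first-order optimality, and finally obtain the Lipschitz estimate from firm non-expansiveness of the proximity map.

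\textbf{Step 1 (well-posedness of the prox).} For each fixed $x$, the map $u \mapsto q_x(u) := \frac{\|x-u\|^2}{2\beta} + \phii(u)$ is proper, lower semicontinuous, and $\frac{1}{\beta}$-strongly convex (since the quadratic term is strongly convex and $\phii$ is convex). Strong convexity plus coercivity of $q_x$ guarantees that $q_x$ attains its infimum at a unique point, which by definition is $u^*(x) := \prox_{\phii}(x,\beta)$, so $\betaphii(x) = q_x(u^*(x))$ is finite and well defined for every $x \in \R^n$.

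\textbf{Step 2 (gradient formula).} I would set $v(x) := \tfrac{1}{\beta}(x - u^*(x))$ and show that $v(x)$ is both a sub- and super-gradient of $\betaphii$ at $x$, forcing differentiability with $\nabla \betaphii(x) = v(x)$. For the upper bound, plug $u^*(x)$ (not the minimizer at $y$) into the definition of $\betaphii(y)$ and expand $\|y-u^*(x)\|^2 = \|x-u^*(x)\|^2 + 2\langle y-x,\, x-u^*(x)\rangle + \|y-x\|^2$ to get
\begin{equation}
\betaphii(y) \le \betaphii(x) + \langle v(x),\, y-x\rangle + \frac{\|y-x\|^2}{2\beta}.
\end{equation}
For the matching lower bound, the first-order optimality condition $0 \in \frac{u^*(x)-x}{\beta} + \partial \phii(u^*(x))$ gives $v(x) \in \partial \phii(u^*(x))$, so by convexity of $\phii$, $\phii(u^*(y)) \ge \phii(u^*(x)) + \langle v(x),\, u^*(y) - u^*(x)\rangle$. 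Adding the quadratic term at $y$ and rearranging yields
\begin{equation}
\betaphii(y) \ge \betaphii(x) + \langle v(x),\, y-x\rangle,
\end{equation}
which together with the upper bound proves differentiability and the claimed formula $\nabla \betaphii(x) = \tfrac{1}{\beta}(x - \prox_\phii(x,\beta))$. (An equivalent, slicker route is to write $\betaphii = \phii \,\square\, \tfrac{\|\cdot\|^2}{2\beta}$, observe that its Fenchel conjugate is $\phii^* + \tfrac{\beta}{2}\|\cdot\|^2$, and invoke the standard duality between $\beta$-strong convexity and $\tfrac{1}{\beta}$-smoothness.)

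\textbf{Step 3 (Lipschitz gradient).} Given the formula, $\tfrac{1}{\beta}$-smoothness reduces to showing that $I - \prox_\phii(\cdot,\beta)$ is $1$-Lipschitz. This is the standard firm non-expansiveness of the proximity operator: from $v(x) \in \partial\phii(u^*(x))$ and $v(y) \in \partial\phii(u^*(y))$ combined with monotonicity of $\partial\phii$, one gets $\langle u^*(x) - u^*(y),\, (x - u^*(x)) - (y - u^*(y))\rangle \ge 0$, which expands to $\|(x - u^*(x)) - (y - u^*(y))\|^2 \le \langle x - y,\, (x - u^*(x)) - (y - u^*(y))\rangle \le \|x-y\|\,\|(x-u^*(x))-(y-u^*(y))\|$. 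Dividing by $\beta$ gives $\|\nabla \betaphii(x) - \nabla \betaphii(y)\| \le \tfrac{1}{\beta}\|x-y\|$, completing the proof. The only mildly delicate step is the monotonicity argument in Step 3 and the uniqueness of $u^*(x)$ in Step 1, everything else is bookkeeping.
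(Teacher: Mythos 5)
Your proof is correct. Note that the paper does not actually prove this lemma --- it is stated as a citation to Proposition 12.29 of \citet{combettes_book} --- so there is no in-paper argument to compare against; what you have written is a correct, self-contained version of the standard textbook proof (unique minimizer from strong convexity, the two-sided sandwich $\betaphii(x) + \langle v(x), y-x\rangle \le \betaphii(y) \le \betaphii(x) + \langle v(x), y-x\rangle + \tfrac{1}{2\beta}\|y-x\|^2$ forcing $\nabla\betaphii(x)=v(x)$, and firm non-expansiveness of $I - \prox_{\phii}(\cdot,\beta)$ from monotonicity of $\partial\phii$), which is essentially the route the cited reference takes. One cosmetic quibble: in Step 2 the upper bound does not show $v(x)$ is a super-gradient (there is a residual quadratic term); what you actually use is that the subgradient inequality from below together with the quadratic upper bound pins down the derivative, which is exactly what your displayed inequalities deliver, so the argument stands as written.
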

\begin{lemma}
\label{lem:moreau_error}
Let $\phii:\R^n \to \R$ a convex, $\rho$-Lipschitz function, then
\begin{enumerate}
\item if $\beta > 0$, then $\betaphii \leq \phii \leq \betaphii + \frac{1}{2}\beta\rho^2$;
\item if $\beta \geq \beta'>0$, then  $\ \betaphii \leq \betaphiiprime \leq \betaphii + \frac{1}{2}(\beta-\beta')\rho^2$.
\end{enumerate}
\end{lemma}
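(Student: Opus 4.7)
The lemma consists of four inequalities, and three of them fall out directly from the variational definition \eqref{eq:moreau_env}. For the first part's lower bound $\moreau{\phii}{\beta} \leq \phii$, plug $u = x$ into the infimum. For the second part's lower bound $\moreau{\phii}{\beta} \leq \moreau{\phii}{\beta'}$ when $\beta \geq \beta'$, note that the objective $\|x-u\|^2/(2t) + \phii(u)$ is nonincreasing in $t$ at every fixed $u$, so its infimum inherits the ordering. For the first part's upper bound $\phii \leq \moreau{\phii}{\beta} + \beta\rho^2/2$, apply the Lipschitz inequality $\phii(x) \leq \phii(u) + \rho\|x-u\|$ to obtain $\phii(x) - \bigl[\|x-u\|^2/(2\beta) + \phii(u)\bigr] \leq \rho\|x-u\| - \|x-u\|^2/(2\beta)$, a concave quadratic in $\|x-u\|$ with maximum $\beta\rho^2/2$; then take the infimum over $u$ on the left.

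The real work is the remaining upper bound in part (2). My plan is to view $t \mapsto \moreau{\phii}{t}(x)$ as a function of the smoothing parameter and control its derivative uniformly over $t$. Setting $u_t := \prox_\phii(x,t)$, Danskin's theorem (applicable because the inner problem is strongly convex and therefore has a unique minimizer) gives $\frac{d}{dt}\moreau{\phii}{t}(x) = -\|x - u_t\|^2/(2t^2)$. The optimality condition for $u_t$ reads $(x-u_t)/t \in \partial\phii(u_t)$, and since every subgradient of a $\rho$-Lipschitz convex function has norm at most $\rho$, we conclude $\|x - u_t\| \leq t\rho$ and hence $|\frac{d}{dt}\moreau{\phii}{t}(x)| \leq \rho^2/2$. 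Integrating from $\beta'$ to $\beta$ then delivers $\moreau{\phii}{\beta'}(x) - \moreau{\phii}{\beta}(x) \leq (\beta-\beta')\rho^2/2$.

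The main obstacle is that a one-shot substitution does not give the sharp constant. Plugging $u = u_\beta$ into the infimum defining $\moreau{\phii}{\beta'}(x)$ and subtracting $\moreau{\phii}{\beta}(x)$ produces only $\frac{(\beta-\beta')\|x-u_\beta\|^2}{2\beta\beta'}$, and applying $\|x - u_\beta\| \leq \beta\rho$ weakens this to $\frac{(\beta-\beta')\beta\rho^2}{2\beta'}$, which blows up when $\beta \gg \beta'$. The integration route is essential because it exploits the bound $\|x - u_t\|^2/t^2 \leq \rho^2$ at every scale $t \in [\beta',\beta]$ rather than at just the worst-case endpoint. A clean alternative I would keep in reserve is the semigroup identity $\moreau{\phii}{\beta} = \moreau{\moreau{\phii}{\beta-\beta'}}{\beta'}$, which together with part (1) applied to the still $\rho$-Lipschitz function $\moreau{\phii}{\beta-\beta'}$ also produces the sharp constant without differentiation.
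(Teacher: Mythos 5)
Your proof is correct, and for the three ``easy'' inequalities it coincides with the paper's argument (substituting $u=x$ into the infimum, monotonicity of the objective in the smoothing parameter, and maximizing the concave quadratic $\rho\|x-u\|-\|x-u\|^2/(2\beta)$). Where you genuinely diverge is the upper bound in part (2). The paper disposes of it in one line via the semigroup identity $\moreau{(\moreau{\phii}{\beta'})}{\beta-\beta'}=\moreau{\phii}{\beta}$ (Proposition 12.22 of Bauschke--Combettes) together with part (1) applied to $\moreau{\phii}{\beta'}$ --- which is still $\rho$-Lipschitz --- with smoothing parameter $\beta-\beta'$. You instead differentiate the envelope in the smoothing parameter, $\tfrac{d}{dt}\,\moreau{\phii}{t}(x)=-\|x-\prox_{\phii}(x,t)\|^2/(2t^2)$, bound this by $\rho^2/2$ using the optimality condition $(x-u_t)/t\in\partial\phii(u_t)$ and the fact that subgradients of a $\rho$-Lipschitz function have norm at most $\rho$, and integrate over $[\beta',\beta]$. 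Both routes give the sharp constant; yours is self-contained (no external semigroup proposition) at the price of a differentiability claim that deserves one more line: Danskin's theorem is usually stated for optimization over a compact set, so you should either note that the minimizers $u_t$ remain in a bounded set for $t\in[\beta',\beta]$, or observe that $s\mapsto\moreau{\phii}{1/s}(x)$ is a pointwise infimum of affine functions of $s$, hence concave, locally Lipschitz, and differentiable wherever the (unique, continuously varying) minimizer exists, which licenses the fundamental theorem of calculus. One small correction to your ``reserve'' alternative: as written, $\moreau{\phii}{\beta}=\moreau{(\moreau{\phii}{\beta-\beta'})}{\beta'}$ with part (1) applied to $\moreau{\phii}{\beta-\beta'}$ compares $\moreau{\phii}{\beta}$ with $\moreau{\phii}{\beta-\beta'}$ and yields the error $\tfrac{1}{2}\beta'\rho^2$, not the claimed bound; you need the other factorization, $\moreau{\phii}{\beta}=\moreau{(\moreau{\phii}{\beta'})}{\beta-\beta'}$, with part (1) applied to $\moreau{\phii}{\beta'}$ --- which is exactly what the paper does.
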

\begin{proof}
The proof extends the one of Lemma 2.5 in \citet{trading} to vectorial functions. Define $\Psi_x(u)= \frac{1}{2\beta}\|x-u\|^2 + \phii(u)$.
We have $\betaphii(x) = \inf_u \Psi_x(u) \leq \Psi_x(x) = \phii(x)$, and this proves the left hand side of the first property.
For the other side of the inequality, we have
\begin{equation}
\begin{split}
\Psi_x(u) &= \frac{1}{2\beta}\|x-u\|^2 + \phii(u) - \phii(x) + \phii(x) \\
&\geq \frac{1}{2\beta}\|x-u\|^2 + \phii(x) - \rho |x-u| 
\end{split}
\end{equation}
where we have used the Lipschitz property of $\phii$. Hence
\begin{equation}
\begin{split}
\betaphii(x) &\geq \phii(x) + \inf_u \left(\frac{\|x-u\|^2}{2\beta} - \rho |x-u| \right)\\
&= \phii(x) - \frac{1}{2}\beta\rho^2~.
\end{split}
\end{equation}
The second property follows from the first one and $\moreau{(\moreau{\phii}{\beta'})}{\beta-\beta'}= \moreau{\phii}{\beta}$
(Proposition 12.22 in \citet{combettes_book}).
\end{proof}
These two lemmas show how to obtain a smooth
approximation of a Lipschitz continuous function, with the tradeoff
between the smoothness and the degree of approximation controlled by
the Lipschitz constant of the non-smooth function.  Furthermore, in order to be able to work with the smoothed approximation
$\betaphii$, and in particular calculate its gradients, all that is
required is access to the proximity operator of the non-smooth
function $\phii$.  This is the reason we require access to $\prox_g$.

\textbf{Relation to Nesterov Smoothing.} The Moreau envelope
described above also underlies Nesterov's smoothing method
\cite{nesterov2005smooth}, although his derivation is somewhat
different. Nesterov refers to a non-smooth function $\phii$ that can
be written as:
\begin{equation}
\phii(x)  = \max \{\langle x, u \rangle -\hat{\phii}(u) : u \in Q \}
\label{eq:phi}
\end{equation}
where $x \in \R^n$, $Q\subseteq\R^m$ is a bounded closed convex set, and proposes to
smoothen the function $\phii\circ A$, where $A:\R^n \to \R^m$ is a
linear operator, with $\tilde{\phii}_\beta\circ A$, where:
\begin{equation}
\tilde{\phii}_\beta(x):=\max \{\langle x, u \rangle -\hat{\phii}(u) - \beta d(u): u \in Q \},
\label{eq:nest}
\end{equation}
with $x\in\R^n$, $\beta>0$, and $d(u) \geq \frac{\sigma}{2} \|u-u_0\|^2$.  The
smoothness and approximation properties of $\tilde{\phii}_{\beta}$ are then
given in terms of the diameter of the set $Q$, the parameter $\sigma$
and a certain norm of the matrix $A$.
Any convex function $\phii$ can indeed be written in the form
\eqref{eq:phi}, where $\hat{\phii}$ is its convex conjugate and $Q$ is
the set of all subgradients of $\phii$ --- the diameter of $Q$ thus
corresponds to the Lipschitz constant (bound on the magnitude of the
subgradients) of $\phii$.  Focusing on $A$ being the identity and
$d(u)=\frac{1}{2}\| u \|^2$ (i.e. $\sigma=1$), we have that
$\tilde{\phii}_{\beta}$ defined in \eqref{eq:nest} is exactly the Moreau
envelope $\betaphii$:
\begin{proposition}
  If $\phii$ is a proper, lower semicontinuous convex function,
  $\hat{\phii}$ is its Fenchel conjugate, $Q$ the set of all its
  subgradients and $d(u)=\frac{1}{2}\| u \|^2$, then
  $\tilde{\phii}_{\beta}=\betaphii$.
\end{proposition}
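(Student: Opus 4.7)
The plan is to compute the Fenchel biconjugate of the Moreau envelope directly and recognize it as Nesterov's formula with $A=I$ and $d(u)=\frac{1}{2}\|u\|^2$. By definition, $\betaphii$ is the infimal convolution of $\phii$ with the quadratic $q_\beta(u):=\frac{1}{2\beta}\|u\|^2$. Applying the standard duality rule that the Fenchel conjugate of an infimal convolution is the sum of conjugates, together with $q_\beta^*(v)=\frac{\beta}{2}\|v\|^2$, yields $(\betaphii)^{*} = \hat{\phii} + \frac{\beta}{2}\|\cdot\|^2$. Since $\betaphii$ is itself proper, lsc, and convex (e.g.\ by Lemma~\ref{lem:moreau_props}), the Fenchel--Moreau biconjugate theorem gives
\[
\betaphii(x) \;=\; \sup_{v\in\R^n}\left\{\langle x,v\rangle - \hat{\phii}(v) - \tfrac{\beta}{2}\|v\|^2\right\}.
\]

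Comparing with \eqref{eq:nest}, it remains only to verify that restricting the supremum from $\R^n$ down to $Q$ does not change its value. Since $\hat{\phii}$ is convex and $\tfrac{\beta}{2}\|\cdot\|^2$ is strongly convex, the objective is strictly concave and coercive in $v$, so a unique maximizer $v^{\star}$ exists. Its first-order optimality condition reads $x-\beta v^{\star}\in\partial\hat{\phii}(v^{\star})$, which by the standard inversion rule for subdifferentials of conjugate pairs is equivalent to $v^{\star}\in\partial\phii(x-\beta v^{\star})\subseteq Q$. Hence the unconstrained supremum is already attained inside $Q$, and so $\tilde{\phii}_\beta(x)=\betaphii(x)$.

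The main obstacle I anticipate is this last step---verifying that Nesterov's restriction to $Q$ is not a strict restriction. It hinges on the Fenchel reciprocity $v\in\partial\phii(u)\iff u\in\partial\hat{\phii}(v)$, which is classical for proper lsc convex functions, so the obstacle is really just bookkeeping. All preceding steps are routine convex duality and rely only on $\phii$ being proper, lsc, and convex; in particular no Lipschitz or domain-boundedness hypothesis needs to enter the proof.
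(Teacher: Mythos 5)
Your proof is correct, but it takes a genuinely different route from the paper's. The paper derives the Proposition as a corollary of its more general Lemma: starting from the representation \eqref{eq:phi} it writes $\hat{\phii}=\phii^*-\delta_Q$, recognizes $\tilde{\phii}_\beta=\bigl(\phii^*+\tfrac{\beta}{2}\|\cdot\|^2+\delta_Q\bigr)^*$, and identifies this by conjugacy calculus as $\moreau{(\phii\,\square\,\delta_Q^*)}{\beta}$; the Proposition then follows from the (unstated) fact that $\phii\,\square\,\delta_Q^*=\phii$ for the canonical choice of $Q$. You instead conjugate the Moreau envelope itself, use biconjugation to get $\betaphii(x)=\sup_{v\in\R^n}\{\langle x,v\rangle-\phii^*(v)-\tfrac{\beta}{2}\|v\|^2\}$, and then dispose of Nesterov's restriction to $Q$ by an attainment argument: the unique unconstrained maximizer $v^\star$ satisfies $x-\beta v^\star\in\partial\phii^*(v^\star)$, hence by subdifferential inversion $v^\star\in\partial\phii(x-\beta v^\star)\subseteq Q$ (indeed $x-\beta v^\star$ is exactly $\prox_\phii(x,\beta)$, consistent with Lemma~\ref{lem:moreau_props}). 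What each buys: the paper's Lemma is more general, covering an arbitrary pair $(\hat{\phii},Q)$ in \eqref{eq:phi} rather than only the canonical one; your argument is more self-contained for the Proposition itself and makes fully explicit the step the paper leaves implicit, namely why taking $Q$ to be the set of all subgradients makes the restriction to $Q$ lossless. Your reasoning is sound throughout --- coercivity of the concave objective follows since the proper function $\phii^*$ majorizes an affine function, attainment from upper semicontinuity, and the reciprocity $u\in\partial\phii^*(v)\iff v\in\partial\phii(u)$ is classical for proper lsc convex $\phii$ --- so no Lipschitz or boundedness hypothesis is needed, as you note.
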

This is obtained as a corollary from a slightly more general Lemma:
\begin{lemma}
  If $\phii$ is a proper, lower semicontinuous convex function which
  can be written as \eqref{eq:phi} and $d(u) = \frac{1}{2} \|u\|^2$,
  then $\tilde{\phii}_\beta=\moreau{(\phii\,\square\, \delta_Q^*)}{\beta}$, where
  $f\,\square\, g$ is the infimal convolution of $f$ and $g$.
\end{lemma}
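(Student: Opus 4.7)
The plan is to prove both sides coincide by showing they have the same Fenchel conjugate and then invoking the Fenchel--Moreau biconjugation theorem.

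First I would rewrite $\tilde{\phi}_\beta$ as a single Fenchel conjugate. Absorbing the constraint $u \in Q$ into the objective yields
\begin{equation}
\tilde{\phi}_\beta(x) \;=\; \sup_{u}\bigl\{\langle x, u\rangle - \hat{\phi}(u) - \delta_Q(u) - \tfrac{\beta}{2}\|u\|^2\bigr\} \;=\; \bigl(\hat{\phi} + \delta_Q + \tfrac{\beta}{2}\|\cdot\|^2\bigr)^{\!*}(x),
\end{equation}
so $\tilde{\phi}_\beta$ is automatically proper, lower semicontinuous and convex. Moreover, representation \eqref{eq:phi} reads exactly $\phi = (\hat{\phi}+\delta_Q)^*$, and after passing to the lower semicontinuous convex hull of $\hat{\phi}+\delta_Q$ (which does not alter the supremum defining $\phi$), biconjugation gives $\phi^* = \hat{\phi}+\delta_Q$.

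Next I would compute the Fenchel conjugate of $\moreau{(\phi\,\square\,\delta_Q^*)}{\beta}$. Writing the Moreau envelope itself as an infimal convolution, $\moreau{\psi}{\beta} = \psi \,\square\, \tfrac{1}{2\beta}\|\cdot\|^2$, and applying the unconditional identity $(\psi_1\,\square\,\psi_2)^* = \psi_1^* + \psi_2^*$ together with $\delta_Q^{**}=\delta_Q$ (since $Q$ is closed convex) and $\bigl(\tfrac{1}{2\beta}\|\cdot\|^2\bigr)^{\!*} = \tfrac{\beta}{2}\|\cdot\|^2$, one obtains
\begin{equation}
\bigl(\moreau{(\phi\,\square\,\delta_Q^*)}{\beta}\bigr)^{\!*} \;=\; \phi^* + \delta_Q + \tfrac{\beta}{2}\|\cdot\|^2 \;=\; \hat{\phi} + \delta_Q + \tfrac{\beta}{2}\|\cdot\|^2,
\end{equation}
using $\phi^*=\hat{\phi}+\delta_Q$ from the first step together with the trivial identity $\delta_Q + \delta_Q = \delta_Q$. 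This is precisely the function whose conjugate is $\tilde{\phi}_\beta$.

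Both $\tilde{\phi}_\beta$ and $\moreau{(\phi\,\square\,\delta_Q^*)}{\beta}$ are proper, lower semicontinuous and convex, and they share the same Fenchel conjugate, so Fenchel--Moreau forces pointwise equality. I expect the only delicate point to be justifying $\phi^*=\hat{\phi}+\delta_Q$: representation \eqref{eq:phi} asserts only that $\phi$ is the conjugate of $\hat{\phi}+\delta_Q$, and recovering the reverse direction requires $\hat{\phi}+\delta_Q$ to be proper lower semicontinuous and convex. This is the standard normalisation of a dual representation and can be arranged without loss of generality by replacing $\hat{\phi}$ with its lower semicontinuous convex hull on $Q$; all remaining manipulations are unconditional conjugate-calculus identities.
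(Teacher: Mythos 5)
Your proof is correct and follows essentially the same route as the paper's: both arguments reduce the claim to the identification $\hat{\phii}=\phii^*-\delta_Q$ on $Q$ (via biconjugation of $\hat{\phii}+\delta_Q$, with the same normalisation caveat the paper leaves implicit) together with the conjugate duality between $\phii^*+\delta_Q+\tfrac{\beta}{2}\|\cdot\|^2$ and $\moreau{(\phii\,\square\,\delta_Q^*)}{\beta}$. The only difference is one of direction: you take conjugates of the infimal convolution (the unconditional identity) and close with Fenchel--Moreau, whereas the paper applies the sum-to-inf-convolution conjugate rules directly, citing \citet{combettes_book} for the exactness.
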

\begin{proof}
From the definition of $\phii$ and \citep[Thm. 13.32]{combettes_book}, 
we derive that $\hat{\phii} = \phii^*-\delta_Q$. Hence
\begin{equation}
\begin{split}
\tilde{\phii}_\beta(x) &= \max_{u \in Q}\left\{  \langle x, u \rangle -\phii^*(u) - \frac{\beta}{2} \|u\|^2 \right\} \\
&= \left(\phii^*+\frac{\beta}{2} \|\cdot\|^2 + \delta_Q \right)^*(x)  = \moreau{(\phii\,\square\, \delta_Q^*)}{\beta}(x),
\end{split}
\end{equation}
where the last equality follows from \citep[Thm. 13.32, Prop. 13.21 and Prop. 14.1]{combettes_book}.
\end{proof}
Nesterov's formulation is somewhat more general, allowing for
different regularizers $d(u)$ and different sets $Q$, but our
presentation is a crisper and more direct statement of the assumptions on the function $\phii$ to be
smoothed and its relationship to $\betaphii$: we just need to be able to
calculate the proximity operator of $\phii$, to obtain gradients of
$\betaphii$, and we need to rely on its Lipschitz constant in order to
be able to control the quality of the approximation. 

\begin{algorithm}[t]
\caption{PRISMA}
\begin{algorithmic}
\STATE {\bf Parameters}~ $\{\beta_k >0: k\in\N\}$
\STATE {\bf Initialize}~ $x_1=y_1 \in\dom h$, $\theta_1=1, L_1=L_f+\frac{1}{\beta_1}$
\FOR {$k=1,2,\ldots$ }
\STATE $L_{k+1} \leftarrow L_f+\frac{1}{\beta_{k+1}}$
\STATE $\theta_{k+1} \leftarrow \frac{2}{1+\sqrt{1+\frac{4 L_{k+1}}{\theta^2_k\, L_k}}}$
\STATE $x_{k+1} \leftarrow \prox_{h}
\Big((1 - \tfrac{1}{L_k\beta_k})y_k - \frac{1}{L_k}\nabla f(y_k)
 + \tfrac{1}{L_k\beta_k}\prox_{g}(y_k,\beta_k )\ , \  \frac{1}{L_k}
\Big)$
\STATE $y_{k+1} \leftarrow  x_{k+1} +
\theta_{k+1}\left(\frac{1}{\theta_k}-1\right) (x_{k+1}-x_k)$
\ENDFOR 
\end{algorithmic}
\label{alg:fista}
\end{algorithm}

\section{PRoximal Iterative SMoothing Algorithm}
\label{sec:fista}

Our proposed method, PRISMA, is given as Algorithm~\ref{alg:fista}.
As is standard in ``accelerated'' first order methods (e.g. \cite{fista}),
we keep track of two sequences of
iterates, $x_k$ and $y_k$. At each iteration we perform a proximal
gradient update $x_{k+1} \leftarrow \prox_{h} \left( y_k -
  \frac{1}{L_k} \nabla(f+\moreau{g}{\beta_k})(y_k) \ , \ \frac{1}{L_k}
\right)$, where the gradient of $\moreau{g}{\beta_k}$ is calculated
according to Lemma \ref{lem:moreau_props} and $\beta_k$ is some
sequence of parameters. We then set $y_{k+1}$ to be a linear
combination of $x_{k+1}$ and $x_k$.  
Note also that the recursive formula for the sequence $\theta_k$ 
satisfies 
\beq
\frac{1}{\theta_{k+1}^2}-\frac{1}{\theta_{k+1}} =
\frac{L_{k+1}}{L_k} \frac{1}{\theta_k^2} \;.
\label{eq:theta}
\eeq

We establish the following guarantee on the values of the objective function.
\begin{theorem}
\label{thm:fista}
If the sequence $\beta_k$ is nonincreasing,
then the iterates of Algorithm~\ref{alg:fista} satisfy, $\forall x^*\in \dom h$,
\begin{equation}
\begin{split}
F(x_{k+1}) - F(x^*)  \leq \frac{1}{2} \theta_k^2 L_k  \left( \|x^*-x_1\|^2 +
 \rho_g^2\ \sum_{i=1}^k \frac{\beta_i}{\theta_i L_i} \right)~.
\label{eq:fista}
\end{split}
\end{equation}
\end{theorem}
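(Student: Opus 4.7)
The plan is to view PRISMA as accelerated proximal-gradient applied to the time-varying smoothed objective $\tilde F_k := f + \moreau{g}{\beta_k} + h$. By Lemma~\ref{lem:moreau_props}, $f+\moreau{g}{\beta_k}$ is $L_k$-smooth, so PRISMA's $x_{k+1}$ step is precisely one FISTA iteration on $\tilde F_k$. The standard one-step FISTA argument (the proximal-gradient inequality applied at the two comparators $z=x^*$ and $z=x_k$, combined with weights $\theta_k$ and $1-\theta_k$, then using the momentum definition of $y_k$ to collapse the resulting quadratic terms) will give, for the auxiliary sequence $u_k := \theta_k^{-1} x_{k+1} - (\theta_k^{-1}-1)x_k$ with $u_0 := x_1$,
\begin{equation}
\tilde F_k(x_{k+1}) \leq \theta_k\,\tilde F_k(x^*) + (1-\theta_k)\,\tilde F_k(x_k) + \tfrac{\theta_k^2 L_k}{2}\bigl(\|u_{k-1}-x^*\|^2 - \|u_k-x^*\|^2\bigr).
\end{equation}

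The next step is where the changing smoothing enters. I would convert the above into a recursion on $\Delta_k := \tilde F_k(x_{k+1}) - \tilde F_k(x^*)$. The difficulty is that $\Delta_{k-1}$ is defined with $\tilde F_{k-1}$ at $x_k$ and $x^*$, not $\tilde F_k$. The monotonicity half of Lemma~\ref{lem:moreau_error}(2) (since $\beta_k\leq\beta_{k-1}$, $\moreau{g}{\beta_{k-1}}\leq\moreau{g}{\beta_k}$) makes the shift of $\tilde F_k(x^*)$ harmless in the right direction, while its quantitative half forces the shift of $\tilde F_k(x_k)$ to cost at most $\tfrac{1}{2}(\beta_{k-1}-\beta_k)\rho_g^2$. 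Substituting, dividing by $\theta_k^2 L_k$, and using the identity $(1-\theta_k)/(\theta_k^2 L_k)=1/(\theta_{k-1}^2 L_{k-1})$ that follows from \eqref{eq:theta}, I obtain the Lyapunov recursion
\begin{equation}
\frac{\Delta_k}{\theta_k^2 L_k}+\tfrac{1}{2}\|u_k-x^*\|^2 \leq \frac{\Delta_{k-1}}{\theta_{k-1}^2 L_{k-1}}+\tfrac{1}{2}\|u_{k-1}-x^*\|^2+\frac{(\beta_{k-1}-\beta_k)\rho_g^2}{2\theta_{k-1}^2 L_{k-1}}.
\end{equation}

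Telescoping this from $2$ to $k$ with base case $\Delta_1/(\theta_1^2 L_1)+\tfrac12\|u_1-x^*\|^2\leq \tfrac12\|x_1-x^*\|^2$ (which at $k=1$ is just a single plain proximal-gradient inequality, since $\theta_1=1$ and $y_1=x_1$), and then converting $\Delta_k$ back to the true objective via $F(x_{k+1})-F(x^*)\leq \Delta_k + \tfrac{1}{2}\beta_k\rho_g^2$ from Lemma~\ref{lem:moreau_error}(1), produces
\begin{equation}
F(x_{k+1})-F(x^*)\leq \tfrac{\theta_k^2 L_k}{2}\Bigl[\|x_1-x^*\|^2 + \rho_g^2\Bigl(\sum_{i=1}^{k-1}\tfrac{\beta_i-\beta_{i+1}}{\theta_i^2 L_i}+\tfrac{\beta_k}{\theta_k^2 L_k}\Bigr)\Bigr].
\end{equation}
The bracketed sum then simplifies to $\sum_{i=1}^k \beta_i/(\theta_i L_i)$ by Abel summation by parts: setting $a_i := 1/(\theta_i^2 L_i)$ with the convention $a_0 := 0$, the $\theta$-update \eqref{eq:theta} is equivalent to $a_i-a_{i-1} = 1/(\theta_i L_i)$ for every $i\geq 1$, and rearranging $\sum_{i=1}^{k-1}(\beta_i-\beta_{i+1})a_i + \beta_k a_k = \sum_{i=1}^k \beta_i(a_i-a_{i-1})$ is exactly what this identity needs.

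The main obstacle I expect is the sign bookkeeping in the middle step. The shift from $\tilde F_{k-1}$ to $\tilde F_k$ raises both $\tilde F_k(x_k)$ and $\tilde F_k(x^*)$ by up to $\tfrac12(\beta_{k-1}-\beta_k)\rho_g^2$, but only one of those movements is in the useful direction when one subtracts $\tilde F_k(x^*)$ from both sides, so one must be careful not to double-count a $\rho_g^2$ error. The second delicate point, and the reason the final bound has the clean form in the theorem, is recognizing the hidden identity $a_i - a_{i-1} = 1/(\theta_i L_i)$ embedded in the $\theta_k$ update; this is precisely what turns $\sum (\beta_i-\beta_{i+1})/(\theta_i^2 L_i)$ into $\sum \beta_i/(\theta_i L_i)$.
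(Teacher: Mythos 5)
Your proposal is correct and follows essentially the same route as the paper's proof: the same one-step accelerated proximal-gradient inequality on the smoothed objective $f+\moreau{g}{\beta_k}+h$, the same use of both parts of Lemma~\ref{lem:moreau_error} to absorb the change of smoothing parameter, the same Lyapunov telescoping via the identity $\frac{1-\theta_{k+1}}{\theta_{k+1}^2 L_{k+1}}=\frac{1}{\theta_k^2 L_k}$, and the same Abel summation to reach $\sum_{i=1}^k \beta_i/(\theta_i L_i)$. The only difference is cosmetic bookkeeping (you measure the gap against the smoothed value $\tilde F_k(x^*)$ and shift the smoothing index at $x_k$, whereas the paper compares to the unsmoothed $F(x^*)$ and shifts it at $x_{k+1}$), which merely moves one term between the sum and the boundary and yields the identical final bound.
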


In the following we show two possible ways to set the sequences
$\beta_k$. One possibility would be to decide on a fixed number of iterations $T$ and to set $\beta_k$ to a
constant value.
\begin{corollary}
\label{cor:fixed_beta}
For a given $T>0$ set $\beta_k=\frac{2\|x_1 - x^*\|}{\rho_g(T+1)} \  \forall k$.
Then for every $x^*\in \dom h$
\begin{equation}
F(x_{T+1}) - F(x^*) \leq 2 \frac{L_f \|x^*-x_1\|^2}{(T+1)^2} +  2\frac{\rho_g \|x^*-x_1\|}{T+1}~.
\end{equation}
\end{corollary}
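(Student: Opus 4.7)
The plan is to specialize Theorem~\ref{thm:fista} to the choice $\beta_k\equiv\beta$ and balance the two contributions by an appropriate choice of $\beta$. With $\beta_k$ constant, $L_k\equiv L:=L_f+\tfrac{1}{\beta}$ is also constant, and the recursion (\ref{eq:theta}) collapses to the familiar FISTA relation
\begin{equation}
\frac{1}{\theta_{k+1}^2}-\frac{1}{\theta_{k+1}}=\frac{1}{\theta_k^2}.
\end{equation}
From $\theta_1=1$ and a routine induction on $1/\theta_k\ge(k+1)/2$, one obtains $\theta_T\le 2/(T+1)$, so $\theta_T^2\le 4/(T+1)^2$.

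The key observation is that the sum $\sum_{i=1}^T 1/\theta_i$ appearing in Theorem~\ref{thm:fista} telescopes. Rewriting the recursion as $1/\theta_{k+1}^2-1/\theta_k^2=1/\theta_{k+1}$ and summing from $k=1$ to $T-1$ gives
\begin{equation}
\frac{1}{\theta_T^2}-1 \;=\; \sum_{k=2}^{T}\frac{1}{\theta_k},
\end{equation}
so that (using $1/\theta_1=1$) $\sum_{i=1}^{T} 1/\theta_i = 1/\theta_T^2$. Hence $\theta_T^2\sum_{i=1}^T 1/\theta_i=1$ exactly --- this is the step I expect to be the most important, since it prevents the smoothing error from accumulating a $\log$ or polynomial factor and is what makes the corollary's clean $\scO(1/(T+1))$ rate possible.

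Plugging both bounds into (\ref{eq:fista}) at index $k=T$ yields
\begin{equation}
F(x_{T+1})-F(x^*) \;\le\; \tfrac{1}{2}\theta_T^2 L\,\|x^*-x_1\|^2 \;+\; \tfrac{1}{2}\rho_g^2\,\beta\,\Bigl(\theta_T^2\sum_{i=1}^T\tfrac{1}{\theta_i}\Bigr) \;\le\; \frac{2L\,\|x^*-x_1\|^2}{(T+1)^2}+\frac{\rho_g^2\beta}{2}.
\end{equation}
Expanding $L=L_f+1/\beta$ produces $\tfrac{2L_f\|x^*-x_1\|^2}{(T+1)^2}+\tfrac{2\|x^*-x_1\|^2}{\beta(T+1)^2}+\tfrac{\rho_g^2\beta}{2}$. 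The last two terms are balanced by the choice $\beta=2\|x^*-x_1\|/(\rho_g(T+1))$ prescribed in the statement, each becoming exactly $\rho_g\|x^*-x_1\|/(T+1)$. Summing gives the claimed bound. The only mildly delicate step is verifying the telescoping identity; everything else is substitution and the standard $\theta_T\le 2/(T+1)$ estimate.
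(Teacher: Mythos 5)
Your proof is correct and follows exactly the route the paper intends (the corollary is stated without proof, but it is the direct specialization of Theorem~\ref{thm:fista} to constant $\beta_k$): the identity $\theta_T^2\sum_{i=1}^T\theta_i^{-1}=1$ you derive by telescoping is equivalent to observing that the differences $\beta_i-\beta_{i+1}$ vanish in the pre-gathered form of the bound, and the rest is the standard estimate $\theta_T\le 2/(T+1)$ plus balancing via the prescribed $\beta$. Nothing is missing.
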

This choice is optimal if we know the number of iterations $T$ we will
use, as well as the Lipschitz constant and a bound on the distance from a minimizer,
sharing the same limitation with \citet{nesterov2005smooth}.  But with
this fixed choice of smoothing, even if we perform additional
iterations, we will not converge to the optimum of our original
objective (only of the smoothed, thus approximate, objective).

We propose a dynamic setting of $\beta_k$, that gives a bound
that holds uniformly for any number of iterations, and thus a method
which converges to the optimum of the original objective.
\begin{corollary}
\label{cor:adaptive}
Let $a > 0$ and 
set $\beta_k=\frac{1}{a k}$ in Algorithm~\ref{alg:fista}, then, uniformly for any $k \in \N$ and for every $x^*\in \dom h$ 
\begin{equation}
\begin{split}
F(x_{k+1}) - F(x^*)  \leq 2\frac{L_f+a k}{(k+1)^2} \Bigg[ \|x^*-x_1\|^2 
 + \frac{\rho_g^2}{a} \Biggl( &
\frac{3}{2a} \log \frac{L_f + a k}{L_f+a} 
 + \frac{1}{L_f+a}
 \Biggr)\Bigg]~.
\end{split}
\end{equation}
\end{corollary}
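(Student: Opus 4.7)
The plan is to specialize Theorem~\ref{thm:fista} to the choice $\beta_k = 1/(ak)$, so that $L_k = L_f + 1/\beta_k = L_f + ak$ is nondecreasing (verifying the monotonicity hypothesis of the theorem). The proof then reduces to two estimates: bounding the prefactor $\tfrac{1}{2}\theta_k^2 L_k$ from above by $2(L_f+ak)/(k+1)^2$, and bounding the inner sum $\sum_{i=1}^k \beta_i/(\theta_i L_i)$ by the bracketed $L_f$-dependent quantity of the corollary.

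For the prefactor, set $t_k = 1/\theta_k$. The recursion \eqref{eq:theta} rewrites as $t_{k+1}^2 - t_{k+1} = (L_{k+1}/L_k)\,t_k^2$. Solving the quadratic and using $\sqrt{c^2 t^2 + 1/4} \geq c t$, one obtains $t_{k+1} \geq \sqrt{L_{k+1}/L_k}\,t_k + 1/2 \geq t_k + 1/2$, where the last inequality uses $L_{k+1} \geq L_k$. Together with $t_1 = 1$, this yields $t_k \geq (k+1)/2$, hence $\theta_k^2 L_k \leq 4(L_f+ak)/(k+1)^2$, matching the leading factor.

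For the inner sum the natural strategy is to bound $t_i$ carefully. A first estimate, proved by induction, is $t_i \leq L_i/a$: the base case $t_1 = 1 \leq L_1/a$ holds since $L_f \geq 0$, and after squaring, the inductive step collapses to the identity $L_{i+1} - L_i = a$. This gives $\beta_i/(\theta_i L_i) = t_i/(ai L_i) \leq 1/(a^2 i)$, which already suffices when $L_f$ is small. For large $L_f$ this bound loses the $L_f$-dependent improvement encoded in $\log((L_f+ak)/(L_f+a))$, so I would complement it with the companion estimate $t_{i+1}/\sqrt{L_{i+1}} \leq t_i/\sqrt{L_i} + 1/\sqrt{L_{i+1}}$ (obtained by keeping the same quadratic solution but dropping the $1/4$ inside the radical in the other direction). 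Telescoping yields $t_i/\sqrt{L_i} \leq \sum_{j=1}^i 1/\sqrt{L_j}$, which can be compared to $\int_0^i dx/\sqrt{L_f+ax} = (2/a)(\sqrt{L_i}-\sqrt{L_f})$ to control $t_i/L_i$ in an $L_f$-sensitive way.

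The main obstacle is combining these estimates to produce the precise constants $3/(2a^2)$ and $1/(a(L_f+a))$ appearing in the corollary. The natural way to isolate the first constant is to peel off the $i=1$ term, which equals exactly $1/(a(L_f+a))$ since $t_1 = \theta_1 = 1$ and $L_1 = L_f+a$, and then to control the tail $\sum_{i=2}^k$ by summation by parts applied to the telescoping identity $t_i^2/L_i - t_{i-1}^2/L_{i-1} = t_i/L_i$ (which follows directly from the recursion). The resulting Abel-summed expression involves $t_i^2/L_i$ weighted by $1/(i(i+1))$, and bounding $t_i^2/L_i$ using the companion estimate reduces the whole sum to $\sum_{i=2}^k 1/L_i$, which integrates to $\frac{1}{a}\log((L_f+ak)/(L_f+a))$. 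Once the inner sum is controlled in this form, multiplying by $\rho_g^2$ and combining with the $\|x^*-x_1\|^2$ term inside the leading $\theta_k^2 L_k$ factor yields the stated inequality for every $k\in\N$ and every $x^* \in \dom h$.
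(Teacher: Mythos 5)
Your overall skeleton is right and matches the paper's: specialize Theorem~\ref{thm:fista} to $\beta_k=1/(ak)$, bound $\tfrac12\theta_k^2L_k$ by $2(L_f+ak)/(k+1)^2$ via the induction $1/\theta_k\ge(k+1)/2$, and peel off the $i=1$ term of the inner sum, which equals $\frac{1}{a(L_f+a)}$ exactly because $\theta_1=1$. The gap is in the tail $\sum_{i\ge2}\beta_i/(\theta_iL_i)$, and you have diagnosed it yourself: none of the estimates you propose closes with the stated constants. The bound $t_i\le L_i/a$ gives $\sum_i 1/(a^2 i)$, which loses the $L_f$-dependence inside the logarithm entirely. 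The route via $t_i/\sqrt{L_i}\le\sum_{j\le i}1/\sqrt{L_j}$ combined with Abel summation of $t_i^2/L_i-t_{i-1}^2/L_{i-1}=t_i/L_i$ yields, after the dust settles, a constant of roughly $4/a^2$ in front of the logarithm (since $(\sqrt{L_i}-\sqrt{L_f})^2=a^2i^2/(\sqrt{L_i}+\sqrt{L_f})^2$ and $(\sqrt{L_i}+\sqrt{L_f})^2\ge L_i$ only give $t_i^2/(L_i\, i(i+1))\le 4/L_i$), not the required $3/(2a^2)$, and it leaves boundary terms from the summation by parts that need separate control. So as written the argument does not establish the corollary.

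The missing ingredient is the complementary \emph{upper} bound on $t_k=1/\theta_k$, which is the other half of the paper's one-line claim $\frac{1}{k+1}<\theta_k\le\frac{2}{k+1}$. One shows by induction that $t_k<k+1$ (in fact $t_k\le k$): the recursion $t_{k+1}^2-t_{k+1}=(L_{k+1}/L_k)\,t_k^2$ together with $L_{k+1}/L_k=1+\frac{a}{L_f+ak}\le\frac{k+1}{k}$ gives $t_{k+1}^2-t_{k+1}\le\frac{k+1}{k}k^2=(k+1)^2-(k+1)$, and monotonicity of $t\mapsto t^2-t$ on $[1/2,\infty)$ closes the step. With $t_i<i+1$ the tail term becomes $\frac{\beta_i}{\theta_iL_i}=\frac{t_i}{a\,i\,L_i}<\frac{i+1}{i}\cdot\frac{1}{aL_i}\le\frac{3}{2a(L_f+ai)}$ for $i\ge2$, and the comparison $\sum_{i=2}^k\frac{1}{L_f+ai}\le\int_1^k\frac{dx}{L_f+ax}=\frac1a\log\frac{L_f+ak}{L_f+a}$ delivers exactly the constants $\frac{3}{2a}$ and $\frac{1}{L_f+a}$ in the statement. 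Your derivation of $t_k\ge(k+1)/2$ and your peeling of the $i=1$ term can be kept verbatim; only the tail estimate needs to be replaced by this two-sided control of $\theta_k$.
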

\begin{proof}
It is easy to derive that $\frac{1}{k+1} < \theta_k\leq \frac{2}{k+1}$ (by induction) and the
elementary chain of inequalities $\sum_{i=1}^k \frac{1}{a+b i} =
\frac{1}{a+b}+\sum_{i=2}^k \frac{1}{a+b i} \leq \frac{1}{a+b}+\int_1^k \frac{1}{a+b
  x} dx$.
The assertion then follows from Theorem~\ref{thm:fista} and these facts.
\end{proof}

Note that the optimal choice of $a$ does depend on the Lipschitz constant $\rho_g$ 
and on the distance of our initial point
from a minimizer, but that the algorithm does converge for \emph{any} choice of $a$.
Thus, compared to the bound in Corollary~\ref{cor:fixed_beta}, the price we pay 
for not knowing in advance how many iterations 
will be needed is only an additive logarithmic factor\footnote{\citet{OuyangG12} 
use a similar proof technique, for 
the simpler case when $h=0$, and they report a convergence rate bound 
without the logarithmic term. Unfortunately their 
proof contains an error; they fixed it in the arxiv version of their paper~\cite{OuyangG12arxiv}. Their new theorem contains the 
term $\max_k \|x^* - x_k\|^2$, which is not known how it can be bounded.}.

When $f=0$ the updates reduce to $x_{k+1} \leftarrow \prox_{h} \left( \prox_{g}(y_k,\frac{1}{a\,k}),\frac{1}{a\,k} \right)$, and $\theta_k=\frac{1}{k}$. 
This can be viewed as an adaptive version of an accelerated backward-backward splitting algorithm.

Before proving Theorem \ref{thm:fista}, we need two additional technical lemmas.
\begin{lemma}[{\em descent lemma}, Thm. 18.15 in \citet{combettes_book}, Thm. 2.1.5 in \citet{nesterov_book}]
The convex function $f$ is $L$-smooth if and only if 
\begin{equation}
f(x) \leq f(y) + \lb x-y, \nabla f(y) \rb + \frac{L}{2} \|x-y\|^2 \quad  \forall x,y \in \R^n.
\label{eq:descent}
\end{equation}
\label{lem:descent}
\end{lemma}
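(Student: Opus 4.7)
The plan is to prove the two directions separately. The forward direction ($L$-smoothness implies the quadratic upper bound) does not even require convexity, and I would obtain it from the fundamental theorem of calculus applied along the segment from $y$ to $x$: write
\[
f(x) - f(y) = \int_0^1 \lb \nabla f(y + t(x-y)), x-y\rb\, dt,
\]
subtract $\lb \nabla f(y), x-y\rb$, bound the integrand in absolute value by $\|\nabla f(y+t(x-y))-\nabla f(y)\|\,\|x-y\| \leq L t \|x-y\|^2$ using the Lipschitz hypothesis, and integrate to get the $\frac{L}{2}\|x-y\|^2$ term.

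For the reverse direction I would use the standard co-coercivity trick, which is where convexity really enters. Fix $y$ and consider the auxiliary function $\phii_y(x) := f(x) - \lb \nabla f(y), x \rb$. It is convex (since $f$ is), satisfies $\nabla \phii_y(y) = 0$, and inherits the quadratic upper bound from $f$. Because $\nabla \phii_y(y)=0$, the point $y$ is a global minimizer of $\phii_y$; hence for any $x$,
\[
\phii_y(y) \leq \phii_y\Bigl(x - \tfrac{1}{L}\nabla \phii_y(x)\Bigr) \leq \phii_y(x) - \tfrac{1}{2L}\|\nabla \phii_y(x)\|^2,
\]
where the second inequality applies the already-established quadratic upper bound to $\phii_y$ at the gradient step. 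Unfolding the definition of $\phii_y$ yields
\[
f(y) \geq f(x) + \lb \nabla f(y), y-x\rb + \tfrac{1}{2L}\|\nabla f(x) - \nabla f(y)\|^2.
\]
Swapping the roles of $x$ and $y$ and adding gives the co-coercivity inequality
\[
\lb \nabla f(x) - \nabla f(y), x-y\rb \geq \tfrac{1}{L} \|\nabla f(x) - \nabla f(y)\|^2,
\]
after which Cauchy--Schwarz on the left-hand side delivers $\|\nabla f(x) - \nabla f(y)\| \leq L\|x-y\|$.

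The main obstacle is the reverse implication: the quadratic upper bound by itself does not force $\nabla f$ to be Lipschitz (it only controls $f$ from above), so one must genuinely exploit convexity. The gradient-step trick applied to the shifted function $\phii_y$ is the cleanest way I know to convert the one-sided quadratic bound into the two-sided co-coercivity estimate, and it is the only nontrivial step in the argument. Both directions together give the ``if and only if'' claimed in the lemma.
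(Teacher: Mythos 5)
Your overall route is the standard textbook argument, which is fitting here: the paper itself gives no proof of this lemma but simply cites Bauschke--Combettes (Thm.~18.15) and Nesterov (Thm.~2.1.5), where essentially this argument appears. The forward direction via the fundamental theorem of calculus (correctly noted not to need convexity) is fine, and for the converse your displays up to and including $\varphi_y(y) \leq \varphi_y(x) - \tfrac{1}{2L}\|\nabla \varphi_y(x)\|^2$ are all correct, with convexity entering exactly where you say it must, namely to conclude that the stationary point $y$ is a global minimizer of $\varphi_y$.

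However, the ``unfolding'' display is mis-transcribed, and taken literally it derails the conclusion. Since $\varphi_y(y) = f(y) - \lb \nabla f(y), y \rb$ and $\varphi_y(x) = f(x) - \lb \nabla f(y), x \rb$, unfolding actually gives
\[
f(x) \;\geq\; f(y) + \lb \nabla f(y), x-y \rb + \tfrac{1}{2L}\|\nabla f(x) - \nabla f(y)\|^2,
\]
whereas you wrote $f(y) \geq f(x) + \lb \nabla f(y), y-x \rb + \tfrac{1}{2L}\|\nabla f(x) - \nabla f(y)\|^2$, i.e.\ the roles of $x$ and $y$ swapped everywhere except in the gradient. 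Adding your version to its own $x \leftrightarrow y$ swap yields $\lb \nabla f(x) - \nabla f(y), x-y \rb \leq -\tfrac{1}{L}\|\nabla f(x) - \nabla f(y)\|^2$ --- co-coercivity with the wrong sign, which together with monotonicity of $\nabla f$ would force $\nabla f(x) = \nabla f(y)$ for all $x,y$, an absurdity. With the corrected display, swapping $x$ and $y$ and adding gives precisely the co-coercivity bound you state, and Cauchy--Schwarz finishes as you describe (trivially when the gradients coincide). So this is a local sign/role slip rather than a conceptual gap; once fixed, the proof is complete and matches the cited sources.
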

\begin{lemma}[Thm. 2.1.2 in \citet{nesterov_book}]
For a $\mu$-{\em strongly convex} function $g$, and ${\hat v}=\argmin{}\, g$,
\begin{align}
&& g(w) - g({\hat v}) \geq \frac{\mu}{2} \|w-{\hat v}\|^2 
&& \forall w \in \R^n \,.
\label{eq:strong}
\end{align}
\label{lem:strong}
\end{lemma}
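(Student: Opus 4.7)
The plan is to exploit the first-order characterization of $\mu$-strong convexity at the minimizer $\hat{v}$. Recall that $g$ is $\mu$-strongly convex if and only if, for every $w,v \in \R^n$ and every subgradient $\xi \in \partial g(v)$,
\[
g(w) \geq g(v) + \langle \xi, w - v \rangle + \frac{\mu}{2}\|w-v\|^2 \,.
\]
This equivalence is standard (e.g.\ essentially the same statement appearing in \citet{nesterov_book}); I would either quote it directly or derive it from the definition that $g - \frac{\mu}{2}\|\cdot\|^2$ is convex, by applying the convex subgradient inequality to $g - \frac{\mu}{2}\|\cdot\|^2$.

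The second ingredient is Fermat's rule: since $\hat{v}$ minimizes the proper convex function $g$, we have $0 \in \partial g(\hat{v})$. Plugging $v = \hat{v}$ and $\xi = 0$ into the strong convexity inequality above then yields
\[
g(w) \geq g(\hat{v}) + \langle 0, w - \hat{v} \rangle + \frac{\mu}{2}\|w-\hat{v}\|^2 = g(\hat{v}) + \frac{\mu}{2}\|w-\hat{v}\|^2 \,,
\]
which is exactly the desired bound \eqref{eq:strong}.

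If one prefers to avoid subgradients (useful if $g$ is not assumed finite on all of $\R^n$ but only proper lsc convex), I would instead argue from the definition. Applying $\mu$-strong convexity to the pair $(w, \hat{v})$ with parameter $\lambda \in (0,1)$ gives
\[
g(\lambda w + (1-\lambda)\hat{v}) \leq \lambda g(w) + (1-\lambda)g(\hat{v}) - \frac{\mu}{2}\lambda(1-\lambda)\|w-\hat{v}\|^2 \,.
\]
Since $\hat{v}$ is the minimizer, the left-hand side is at least $g(\hat{v})$; rearranging and dividing by $\lambda > 0$ yields $g(w) - g(\hat{v}) \geq \frac{\mu}{2}(1-\lambda)\|w-\hat{v}\|^2$. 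Letting $\lambda \downarrow 0$ gives the claim. The only subtle point in either approach is ensuring that the definition of strong convexity used matches the ambient assumptions (proper lsc convex functions), but this is routine and no real obstacle arises.
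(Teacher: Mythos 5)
Your proof is correct in both variants, but note that the paper does not actually prove this lemma: it is imported wholesale as a citation (Thm.~2.1.2 in \citet{nesterov_book}), so there is no internal argument to compare against --- you have supplied a proof where the paper supplies a pointer. What your write-up adds is worth recording. Nesterov's statement is given in the setting of differentiable functions, whereas in the one place the paper invokes Lemma~\ref{lem:strong} --- inside the proof of Theorem~\ref{thm:fista}, applied to $x\mapsto \lb \nabla (f+\moreau{g}{\beta_k})(y_k), x\rb + \tfrac{L_k}{2}\|x-y_k\|^2 + h(x)$ --- the function contains $h$, which is proper, lower semicontinuous, and possibly $+\infty$-valued, hence not differentiable. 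Your first argument (Fermat's rule $0\in\partial g(\hat v)$ combined with the subgradient form of the strong-convexity inequality) and especially your second, purely definitional argument (apply the convex-combination inequality, lower-bound the left-hand side by $g(\hat v)$, divide by $\lambda$, and let $\lambda\downarrow 0$) are valid at exactly this level of generality, so they close the small mismatch between the cited theorem and the way the lemma is actually used. The only bookkeeping left implicit in your limiting argument is that $g(\hat v)$ is finite (true, since $g$ is proper and attains its minimum at $\hat v$) and that the claim is vacuous when $g(w)=+\infty$; both points are routine, as you note.
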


\begin{proof}\textbf{(of Theorem~\ref{thm:fista})}
Denote $F^{\beta_k}(x):=f(x)+\moreau{g}{\beta_k}(x)+h(x)$.
Fix an arbitrary $k\in\N$. Since $f + \moreau{g}{\beta_k}$ is
$L_k$-smooth by Lemma~\ref{lem:moreau_props}, applying Lemma~\ref{lem:descent} yields
\begin{align*}
F^{\beta_k}(x_{k+1}) &\leq f(y_k) + \moreau{g}{\beta_k}(y_k) + \frac{L_k}{2} \|x_{k+1}-y_k\|^2 \\
& + \left\langle \nabla (f + \moreau{g}{\beta_k})(y_k), x_{k+1}-y_k \right\rangle + h(x_{k+1})~.
\end{align*}
We now use the strong convexity of the function
$x\mapsto\left\langle \nabla (f + \moreau{g}{\beta_k})(y_k), x\right\rangle + \tfrac{L_k}{2} \|x-y_k\|^2 + h(x)$,
whose minimizer equals $x_{k+1}$ (by Lemma~\ref{lem:moreau_props}).
Using Lemma \ref{lem:strong} with $w = (1-\theta_k)x_k + \theta_k x$, 
we obtain
\begin{equation}
\begin{split}
F^{\beta_k}(x_{k+1}) &\leq f(y_k) + \moreau{g}{\beta_k}(y_k) + h\left( (1-\theta_k)x_k + \theta_k x \right) \\
& \quad +\lb \nabla (f+ \moreau{g}{\beta_k})(y_k), (1-\theta_k)x_k + \theta_k x - y_k\rb \\
& \quad + \frac{L_k}{2} \|(1-\theta_k)x_k + \theta_k x -y_k\|^2  
 - \frac{L_k}{2} \|(1-\theta_k)x_k + \theta_k x - x_{k+1}\|^2 \,.
\end{split}
\end{equation}
We let $z_k := \frac{1}{\theta_k}y_k + \left(1-\frac{1}{\theta_k}\right)x_k$
for every $k\in\N$ and note that $z_{k+1} = x_k + \frac{1}{\theta_k}(x_{k+1}-x_k)$
by the algorithmic construction of $y_{k+1}$. Therefore,
\begin{equation}
\begin{split}
F^{\beta_k}(x_{k+1}) &\leq f(y_k) + \moreau{g}{\beta_k}(y_k) + h\left( (1-\theta_k)x_k + \theta_k x \right) \\
&  \quad + \lb \nabla (f + \moreau{g}{\beta_k})(y_k), (1-\theta_k)x_k + \theta_k x - y_k\rb \\
&  \quad + \frac{\theta_k^2 L_k}{2} \|x-z_k\|^2 - \frac{\theta_k^2 L_k}{2} \|x-z_{k+1}\|^2 \\
&\leq f(y_k) + \moreau{g}{\beta_k}(y_k) + (1-\theta_k) h(x_k) + \theta_k h(x)\\
&  \quad + \lb \nabla (f + \moreau{g}{\beta_k})(y_k), (1-\theta_k)x_k + \theta_k x - y_k\rb \\
&  \quad + \frac{\theta_k^2 L_k}{2} \|x-z_k\|^2 - \frac{\theta_k^2 L_k}{2} \|x-z_{k+1}\|^2 \\
&\leq (1-\theta_k) F^{\beta_k}(x_k) + \theta_k F^{\beta_k}(x) \\
&  \quad + \frac{\theta_k^2 L_k}{2} \|x-z_k\|^2 - \frac{\theta_k^2 L_k}{2} \|x-z_{k+1}\|^2,
\end{split}
\end{equation}
where we have used the convexity of the functions $f, g, h$.
Using Property 1 in Lemma \ref{lem:moreau_error} we have
\begin{align*}
F^{\beta_k}(x_{k+1}) - F(x) &\leq \frac{\theta_k^2 L_k}{2} \left(\|x-z_k\|^2 - \|x-z_{k+1}\|^2\right) \\ 
& \quad + (1-\theta_k) \left( F^{\beta_k}(x_k) - F(x) \right)~.
\end{align*}
We now use Property 2 in Lemma~\ref{lem:moreau_error} to change the smoothing parameter. Denoting by $D_k=F^{\beta_{k}}(x_{k}) - F(x)$, we obtain
\begin{equation}
\begin{split}
\frac{D_{k+1}}{\theta_k^2 L_k} &\leq  \frac{1}{2} \left(\|x-z_k\|^2 - \|x-z_{k+1}\|^2\right) 
 +\frac{(\beta_k-\beta_{k+1})\rho_g^2}{2 \theta_k^2 L_k} + \frac{1-\theta_k}{\theta_k^2 L_k} D_{k} ~.
\end{split}
\end{equation}
Using the definition of $\theta_k$ \eqref{eq:theta}, and summing we obtain
\begin{equation}
\begin{split}
\frac{D_{k+1}}{\theta_k^2 L_k}
&\leq  \frac{1}{2} \left(\|x-z_1\|^2 - \|x-z_{k+1}\|^2\right) 
 + \sum_{i=1}^k \frac{(\beta_i-\beta_{i+1})\rho_g^2}{2 \theta_i^2 L_i}~.\\
\end{split}
\end{equation}


Finally, we apply Property 1 in Lemma \ref{lem:moreau_error} to obtain 
$F(x_{k+1}) \leq F^{\beta_{k+1}}(x_{k+1}) + \frac{1}{2} \beta_{k+1} \rho_g^2$
and reordering the terms we obtain that
\begin{equation}
\begin{split}
D_{k+1}
&\leq  \frac{1}{2} \theta_k^2 L_k  \|x-x_1\|^2 +
\frac{1}{2} \beta_{k+1} \rho_g^2  + 
\frac{1}{2} \theta_k^2 L_k \sum_{i=1}^k \frac{(\beta_i-\beta_{i+1}) \rho_g^2}{\theta^2_i L_i} \;.
\end{split}
\end{equation}
Applying the definition of $\theta_k$ \eqref{eq:theta} 
and gathering the $\beta_i$ terms, we obtain the stated bound.
\end{proof}




\subsection*{Relation to Prior Work}  

With $g=0$, PRISMA with fixed smoothing reduces to a
variant of FISTA \citep{fista}, with essentially the same guarantee of
$\scO(\frac{L_f}{k^2})$. 

Conversely, with $h=0$, or $h$ being an indicator for a convex domain,
PRISMA  becomes similar to Nesterov's smoothed accelerated method \citep{nesterov2005smooth},
obtaining the rate of $\scO(\frac{\log k}{k})$, but with some important differences.  First, we provide
an explicit bound in terms of the Lipschitz constant of $g$, as
discussed in Sec.~\ref{sec:smoothing}. Second, Nesterov's method
relies on a fixed domain and involves two projections onto the domain
at each iteration, whereas PRISMA uses only a single projection.
Moreover, Nesterov's methods and guarantees rely on the domain being
bounded and require that the number of iterations must be fixed in
advance, in order to set its parameters.  On the other hand, the
dynamic tuning of $\beta_k$ in PRISMA allows us to obtain a guarantee
that depends only on $\|x^*-x_1\|$, and with parameters
(i.e.~$\beta_k$ and $\theta_k$) which do not have to depend on it,
or on a fixed number of iterations.  Unbounded
domains are frequently encountered in practice, for example, in our
motivating application of max-norm optimization. The excessive gap
primal-dual algorithm in \citet{Nesterov05} improves over
the one in \citet{nesterov2005smooth} because it does not need to fix in advance
the number of iterations, but it shares the same shortcoming on the
assumption of the bounded domain.


The primal-dual algorithm in \citet{ChambolleP11} can minimize
functions of the form $g(K x)+h(x)$, where $K$ is a linear operator,
and it assumes to have access to $\prox_g$ and $\prox_h$. The original
problem is transformed into a saddle point problem and they obtain a
$\scO(\frac{1}{k})$ convergence rate for the primal-dual gap, for
non-smooth objective functions. However this kind of guarantee will
translate to a guarantee on the functional objective value only
assuming that $g$ and $h^*$ are Lipschitz (see also the discussion on
this point in \citet{LorisV11}). Moreover it is not clear how the
obtained bound explicitly depends on the Lipschitz constants and the
other relevant quantities.

A different approach is given by the Alternating Direction Method of Multipliers (ADMM) 
algorithms~\citep{BoydPCPE11}, that optimize the augmented Lagrangian associated to the optimization problem, updating alternatively the variables of the problem.
ADMM approaches have the drawback that constraints in the optimization problem are not necessarily satisfied at every iteration. This makes them ill suited to problems like max-norm matrix completion, where two matrices would be generated at each iteration, only one of the two being PSD, and for none of the two a convergence rate bound is known.
Recently, \citet{goldfarb,NIPS2010_0109} presented an ADMM method, called Alternating Linearization Method
(ALM), which uses alternate projections of an augmented Lagrangian. It is applicable when $g=0$, that is, for
objectives which can be decomposed into a smooth plus a ``simple''
function, and it has the same rate of convergence of FISTA. It can also be used with non-smooth functions, using the
smoothing method proposed by \citet{nesterov2005smooth}, and hence it shares the same shortcomings.

Another recent related method \citep{LorisV11} is applicable when $h=0$
and $f$ is quadratic.  This method differs significantly from both
PRISMA and Nesterov's aforementioned methods, in part because no
$\theta_k$ and no affine update of $y_k$ are required. The
applicability of \citet{LorisV11} is not as general as PRISMA (for
example it does not apply to max norm regularization or basis pursuit)
and it is not clear whether it can be extended to any smooth function
$f$.  Similar to PRISMA, it attains an $\scO\left(\frac{1}{k}\right)$
rate without requiring boundedness of the domain.


PRISMA combines the benefits of many of the above methods: we can
handle functions decomposable to three parts, as in the max-norm
optimization problem, with an unbounded domain, and without having
the parameters to depend on the distance to a minimizer nor to fix a priori the number of iterations, at the cost of
only an additional log-factor.
This is an important advantage in practice: setting parameters is
tricky and we typically do not know in advance how many iterations will be required.

\section{Applications}
\label{sec:app}

\subsection{Max Norm Regularized Matrix completion}
\label{sec:maxnorm}

The max norm (also known as the
$\gamma_{2:\ell_1\rightarrow\ell_{\infty}}$ norm) of a matrix $X$ is
given by $\|X\|_{max} = \inf_{X=UV\trans} \|U\|_{2,\infty}
\|V\|_{2,\infty}$, where $\|U\|_{2,\infty}$ is the maximal row norm of
$U$.  The max norm has been suggested as an effective regularizer for
matrix completion (i.e.~collaborative filtering) problems, with some
nice theoretical and empirical advantages over the more commonly used
nuclear norm (or trace norm) \cite{Srebro,RinaNatiCOLT,lee_maxnorm}.
It has also been recently suggested to use it in clustering problems,
where it was shown to have benefits over a nuclear-norm based
approach, as well as over spectral clustering \cite{jalali_maxnorm}.
However, the max norm remains much less commonly used than the nuclear
norm. One reason might be the lack of good optimization approaches for
the max norm. 
It has also been recently brought our
attention that \citet{Jaggi11} suggested an optimization approach for
$\min_{\|X\|_{max} \leq 1} f(X)$ which is similar in some ways to
these first order methods, and which requires $\scO\left(\frac{L_f}{k}\right)$
iterations, but where each iteration consists of solving a max-cut
type problem instead of an SVD---the practicality of such an approach
is not yet clear and as far as we know it has not been implemented.
Instead, the only practical approach we are aware of is a non-convex
proximal point (NCPP) approach, without any performance guarantees, and
relying on significant ``tweaking'' of learning rates
\cite{lee_maxnorm}.  Here we present the first practical first order
optimization method for max norm regularized problems.

To demonstrate the method, we focus on the matrix completion setting,
where we are given a subset $\Omega$ of observed entries of a matrix
$M \in \R^{m \times n}$ and would like to infer the unobserved entries.
Using max norm regularization, we have to solve the following optimization problem:
\begin{align}
\label{eq:mxnorm_opt_prob}
&\min_W \  \lambda \|W\|_{max} + \|P_\Omega(W) - P_\Omega(M)\|^2_F~.
\end{align}
where $P_{\Omega}$ is a projection onto the entries in $\Omega$
(zeroing out all other entries).  Expressing the max norm through its
semi-definite representation
we can rewrite \eqref{eq:mxnorm_opt_prob} as
\begin{align*}
\min_{A,B,X} && \big\{ \lambda \max diag \left(\begin{smallmatrix} A & X \\ X\trans & B
\end{smallmatrix}\right) +  \|P_\Omega(X) - P_\Omega(M)\|^2_F \\
&&\qquad + \delta_{\Sb^{m+n}_{+}}\left(\begin{smallmatrix} A & X \\ X\trans & B \end{smallmatrix}\right) \big\}~.
\end{align*}

Setting $f(X)= \|P_\Omega(X) -
P_\Omega(M)\|^2_F$, $g(X)=\lambda \max diag (X)$ and
$h(X)=\delta_{\Sb^{m+n}_{+}}(X)$, we can apply PRISMA.  The proximity
operator of $h$ corresponds to the projection onto the set of $(m+n)
\times (m+n)$ positive semidefinite matrices, that can be computed by
an eigenvalue decomposition and setting to zero all the negative
eigenvalues.  The proximity operator of $g$ amounts to thresholding
the diagonal elements of the matrix.

To demonstrate the applicability of PRISMA, we conducted runtime
experiments on the MovieLens 100K
Dataset\footnote{\url{http://www.grouplens.org/node/73}}, selecting an
equal and increasing number of users and movies with the most ratings.
We used training data equal to 25\% of the total number of entries,
selected uniformly at random among the ranked entries. We implemented PRISMA in MATLAB.
We stopped PRISMA when the relative difference
$\frac{\|X_{k+1}-X_k\|_F}{\|X_k\|_F}$ was less than $10^{-5}$.  The
computation bottleneck of the algorithm is the computation of the
eigenvalue decomposition at each step. So, to speed-up the algorithm,
we implemented the strategy to calculate only the top $c$ eigenvalues,
where $c$ is defined as $\min(m+n,(\# \textit{ eigenvalues } > 0
\textit{ at previous iteration})+1)$. If the smallest calculated
eigenvalue is positive, we increase $c$ by 5 and recalculate them,
otherwise we proceed with the projection step. This strategy is based
on the empirical observation that the algorithm produces solutions of
decreasing rank, and practically this procedure gives a big speed-up
to the algorithm.  This kind of strategy is common in the matrix
completion literature and RPCA, see e.g. \cite{goldfarb09}.  We
compared our algorithm to SDPT3 \cite{TutuncuTT03}, a state-of-the
package for semidefinite-quadratic-linear programming.  For simplicity
the parameter $\lambda$ was set the same in all the experiments, equal
to $0.2 |\Omega|$ that guarantees reasonable good generalization
performance for all the sizes considered. The parameter $a$ of PRISMA
was set to a rough estimate of the optimal value, that is, to
$\frac{\lambda \sqrt{|\Omega|}}{(m+n) \|P_\Omega(M)\|_F}$.  The
running times and the final value of the objective function are listed
in Table~\ref{table:results_max_norm}.  SDPT3, a highly optimized SDP
solver relying on interior point methods, indeed has very good
performance for relatively small problems.  But very quickly, the poor
scaling of interior point methods kick in, and the runtime increases
dramatically.  More importantly, SDPT3 ran out of memory and could not
run on problems of size 300x300 or larger, as did other SDP solvers we
tried.  On the other hand, the number of iterations required by PRISMA
to reach the required precision increases only mildly with the size of
the matrices and the increase in time is mainly due to the more
expensive SVDs.  We could run PRISMA on much larger problems,
including the full MovieLens 100K data set.
In Figure~\ref{fig:movielens100k} we report the objective value as
function of time (in seconds) for PRISMA on this data set.  The time
of each PRISMA iteration decreased over time, as the rank of the
iterate decreases, ranging from 30 seconds of the first iterations to
2 seconds of the last ones, yielding a total runtime of several hours
on the entire dataset.  In contrast, it is inconceivable to use
generic SDP methods for such data.  On the other extreme, non-convex
proximal-point (NCPP) methods are indeed much faster on this (and even
much larger) data sets, taking only about ten seconds to reach a
reasonable solution.  However, even running NCPP for many hours with
different step size settings and regardless of the allowed dimensionality, NCPP was not
able to reach the global minimum, and was always at least $4\%$ worst
than the solution found by PRISMA---the minimum objective value attained by
NCPP is also plotted in Figure~\ref{fig:movielens100k}.  PRISMA thus
fills a gap between the very pricey SDP methods that are practical
only on tiny data sets, and the large-scale NCPP methods which require
significant parameter tweaking and produce only rough (even if sometimes
satisfying) solutions.

\begin{table}[t]
\centering \footnotesize
\begin{tabular}{|c||c|c||c|c|c|}  \hline
                   & \multicolumn{2}{|c||}{SDPT3} & \multicolumn{3}{|c|}{PRISMA} \\ \hline
Matrix Size        & Cost    & Time  & Cost     & Iter. & Time \\ \hline
100x100            & 1078.76 & 13s   & 1079.00  & 7366  & 124s \\ \hline
150x150            & 2617.65 & 89s   & 2618.11  & 7866  & 218s \\ \hline
200x200            & 4795.76 & 364s  & 4796.97  & 8908  & 281s \\ \hline
250x250            & 7736.76 & 1323s & 7738.59  & 9524  & 404s \\ \hline
300x300            & --      & --    & 11406.39 & 9843  & 578s \\ \hline
\end{tabular}
\caption{Performance evaluation of PRISMA and SDPT3 on subsets of the MovieLens 100K dataset.}
\label{table:results_max_norm}
\end{table}

\begin{figure}[t]
\centering
\includegraphics[width=.6\linewidth]{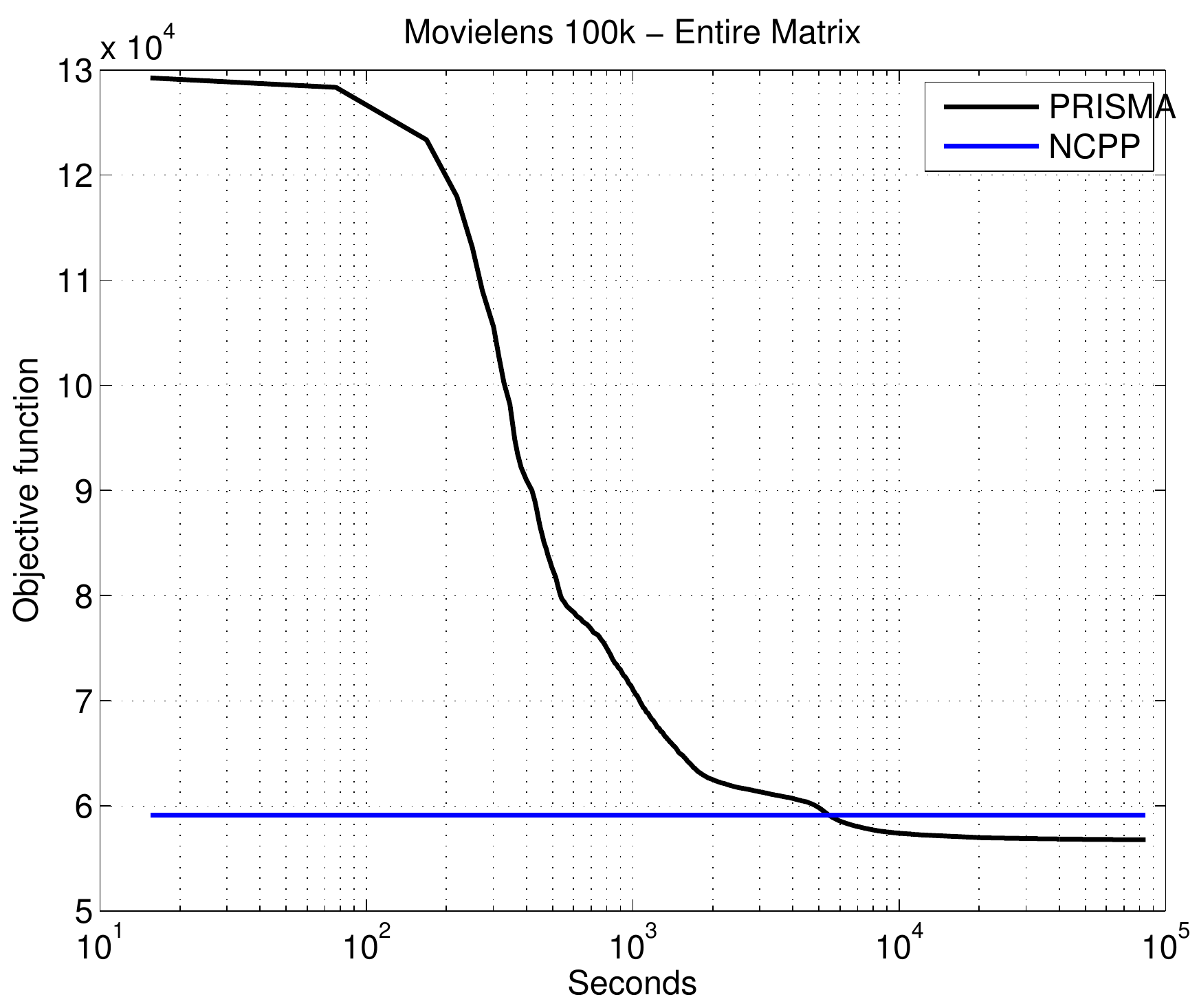}
\caption{Objective value for PRISMA as a function of iterations in log scales.}
\label{fig:movielens100k}
\end{figure}


\subsection{Robust PCA}
\label{sec:robustPCA}

In the method of \citet{robustPCA} for robust PCA, a data matrix $M
\in \R^{n_1 \times n_2}$ is given and the goal is to solve \beq \min \
\{ \|W\|_{tr} + \lambda \,\|S\|_1 : W + S = M, \, W,S\in \R^{n_1
  \times n_2} \}~.  \eeq This problem is of the form \eqref{eq:opt}
with the choice $f = 0, g(W) = \lambda \, \|M-W\|_1, h(W) =
\|W\|_{tr}$.  We used the RPCA method to solve the task of background
extraction from surveillance video.  By stacking the columns of each
frame into a long vector, we get a matrix $M$ whose columns correspond
to the sequence of frames of the video. This matrix $M$ can be
decomposed into the sum of two matrices $M = W + S$. The matrix $W$,
which represents the background in the frames, should be of low rank
due to the correlation between frames. The matrix $S$, which
represents the moving objects in the foreground in the frames, should
be sparse since these objects usually occupy a small portion of each
frame.

We have compared our MATLAB implementation of PRISMA to
ALM~\cite{goldfarb09,goldfarb}, where this problem is solved with an
augmented Lagrangian method, smoothing both the trace and the $\ell_1$
norm.  We further used the ALM continuation strategy described
in~\citet{goldfarb09}, that is usually beneficial in practice even
though no theoretical iteration count guarantees are known for it.
The parameter $a$ of PRISMA, analogously to max-norm experiments, is
set to $\frac{\lambda\sqrt{n_1 n_2}}{\|M\|_F}$. We also compared it to
the static strategy, setting $\beta_k$ to constant times $a$.

In our experiments, we used two
videos\footnote{\url{http://perception.i2r.a-star.edu.sg/bk_model/bk_index.html}}
introduced in \citet{Li_Huang_Gu_Tian_2004}, ``Hall'' and ''Campus''
with the same parameters used in~\citet{goldfarb09}.  Since for all
algorithms the complexity for each update is roughly the same, and
dominated by an SVD computation, we directly compare the number of
iterations in order to reduce the hardware/implementation dependence
of the experiments.  The average CPU time per iteration for the algorithms and datasets are reported in Table~\ref{table:time_rpca}.
The results are shown in Figure~\ref{fig:rpca}. On both image sequences
PRISMA outperforms the static algorithm, uniformly over the number of
iterations.  Notice also that for some constant settings the algorithm does
not converge to the optimum, as expected from Theorem~\ref{thm:fista}.
Moreover it also outperforms the state-of-the-art algorithm ALM.

\begin{figure}[h]
  \centering \footnotesize
  \includegraphics[width=0.60\linewidth]{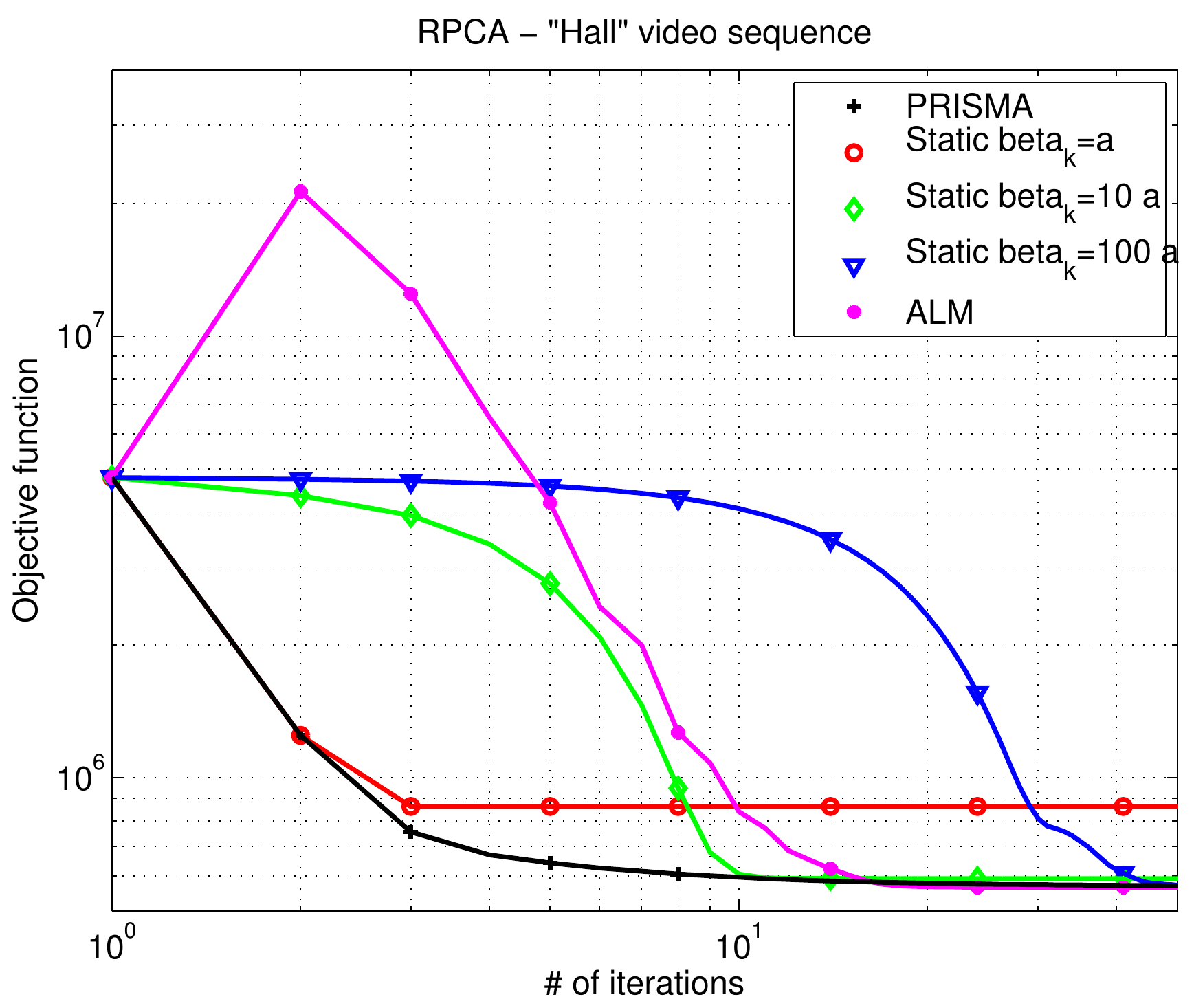} \\
  \includegraphics[width=0.60\linewidth]{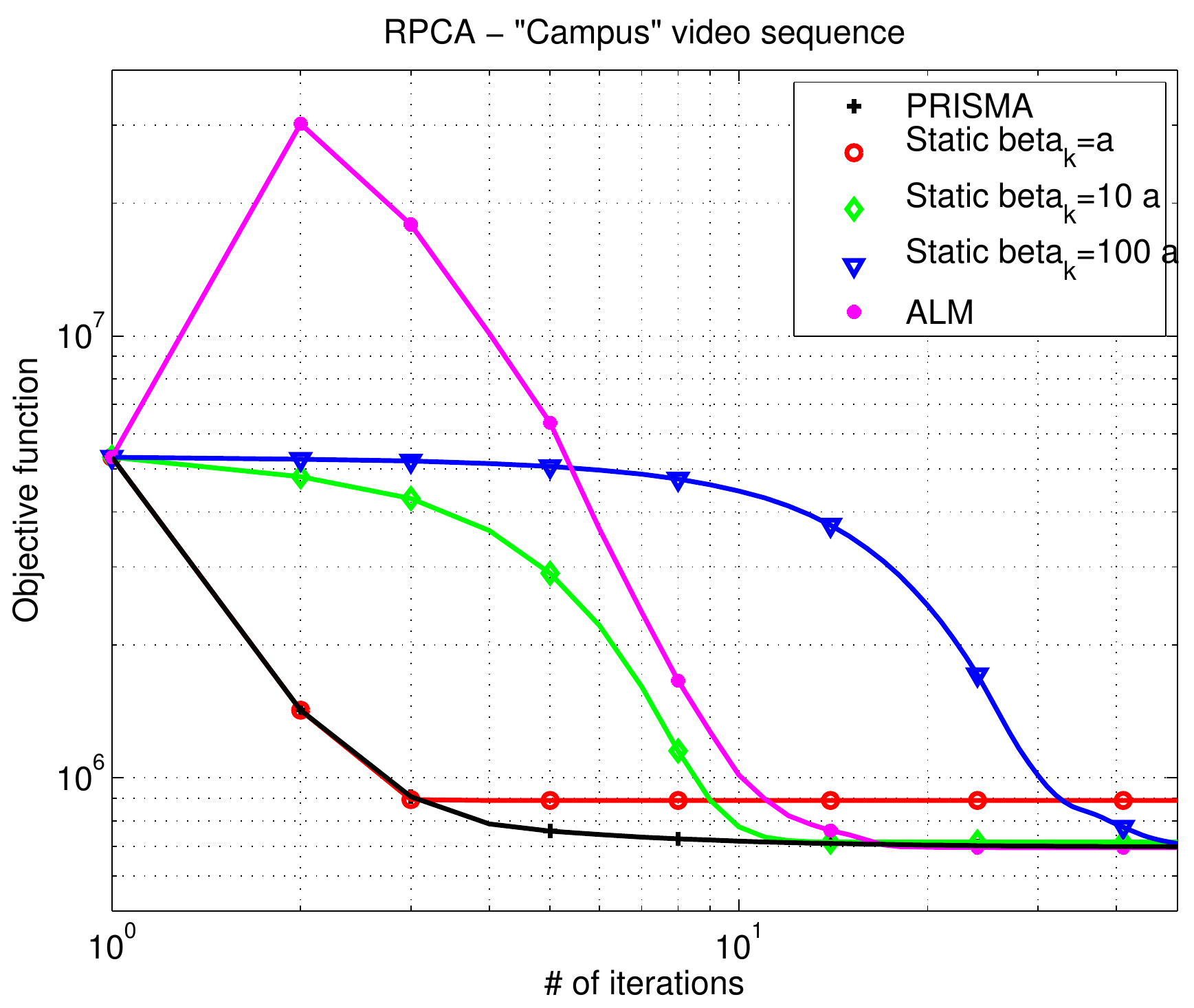}
\caption{Comparison of the objective values as a function of the number of iterations, in log scales.}
\label{fig:rpca}
\end{figure} 

\begin{table}[h]
\centering \footnotesize
\begin{tabular}{|c||c|c|}  \hline
       & Campus      & Hall         \\ \hline
PRISMA (static \& dynamic) & $3.5\pm0.1$ & $1.9\pm0.01$ \\ \hline
ALM    & $5\pm0.3$   & $2.66\pm0.1$ \\ \hline
\end{tabular}
\caption{Time (in seconds) per step, average $\pm$ standard deviation.}
\label{table:time_rpca}
\end{table}

 
\subsection{Sparse Inverse Covariance Selection}
\label{sec:l1}

In sparse inverse covariance selection (SICS) -- see \cite{wainwright_graph,yuan_graph} and references therein -- the goal is to solve 
\begin{equation}
\min_{X \in \Sb^n_{++}} \  -\log\det(X) + \lb \Sigma, X\rb + \lambda \, \|X\|_1,
\label{eq:spics}
\end{equation}
where $\Sigma \in \Sb^n_{+}$ is a given positive semidefinite matrix.
This problem is of the form \eqref{eq:opt} with the choice
$f = \lb \Sigma, \cdot\rb, g = \lambda \, \|\cdot\|_1$, 
$h = -\log\det(\cdot) + \delta_{\Sb^n_{++}}(\cdot)$.
The required proximity operator of $h$ at $X\in\Sb^n_{++}$ can be easily computed 
by computing a singular value decomposition of $X$ and solving 
a simple quadratic equation to obtain each singular value of the result.
We have compared PRISMA with two state-of-the-art approaches:
ALM~\cite{NIPS2010_0109}, which achieves stats-of-the-art empirical
results, and ADMM~\cite{BoydPCPE11}\footnote{We used the MATLAB code
  available at \url{http://www.stanford.edu/~boyd/papers/admm/}.}, a
recently popular general optimization method that performs very well
in practice, but lacks iteration complexity guarantees.  

ALM uses a complex continuation strategy that requires tuning of four
different parameters -- we used the data-set specific parameter
settings provided by~\citet{NIPS2010_0109}.  For ADMM we used the
default parameters.  For both ALM and ADMM there is no clear theory on
how to set their parameters.  To set the $a$ parameter of PRISMA we
need a rough estimate of $\|X^*\|_F$. The optimal solution of
\eqref{eq:spics} will be very close to a diagonal matrix, so we
approximate its optimum of adding the constraint $X=\alpha I$, and it
is easy to see that the optimal $\alpha$ is $\frac{1}{1+\lambda}$.
Using the fact that $\rho_g=\lambda n$, we set $a=\lambda (1+\lambda)
\sqrt{n}$ in all experiments.  That is, both for ADMM and PRISMA we
did not specifically tune any parameters.

We experimented on same five gene expression network data sets from
\citet{LiT10}, which were also used by \citet{NIPS2010_0109}, and used the same setting $\lambda=0.5$.

The results on the datasets are shown in Fig.~\ref{fig:sics}-\ref{fig:sics5}. Again we show the objective function in
relation to the number of iterations, being the complexity per step of
the algorithms roughly the same. The average times per iteration are reported
in Table~\ref{table:time_sics},\ref{table:time_sics2}. We can see that the performance of
PRISMA is pretty close to the one of ALM, while ADMM results in a
slower convergence.  Notice that the difference between ALM and ADMM
is probably mainly due to the continuation strategy used in ALM, whose
parameters are been tuned on these datasets. On the other hand PRISMA
just has one parameter, whose optimal setting is given by the theory.

\begin{figure}[h]
  \centering \footnotesize
  \includegraphics[width=0.60\linewidth]{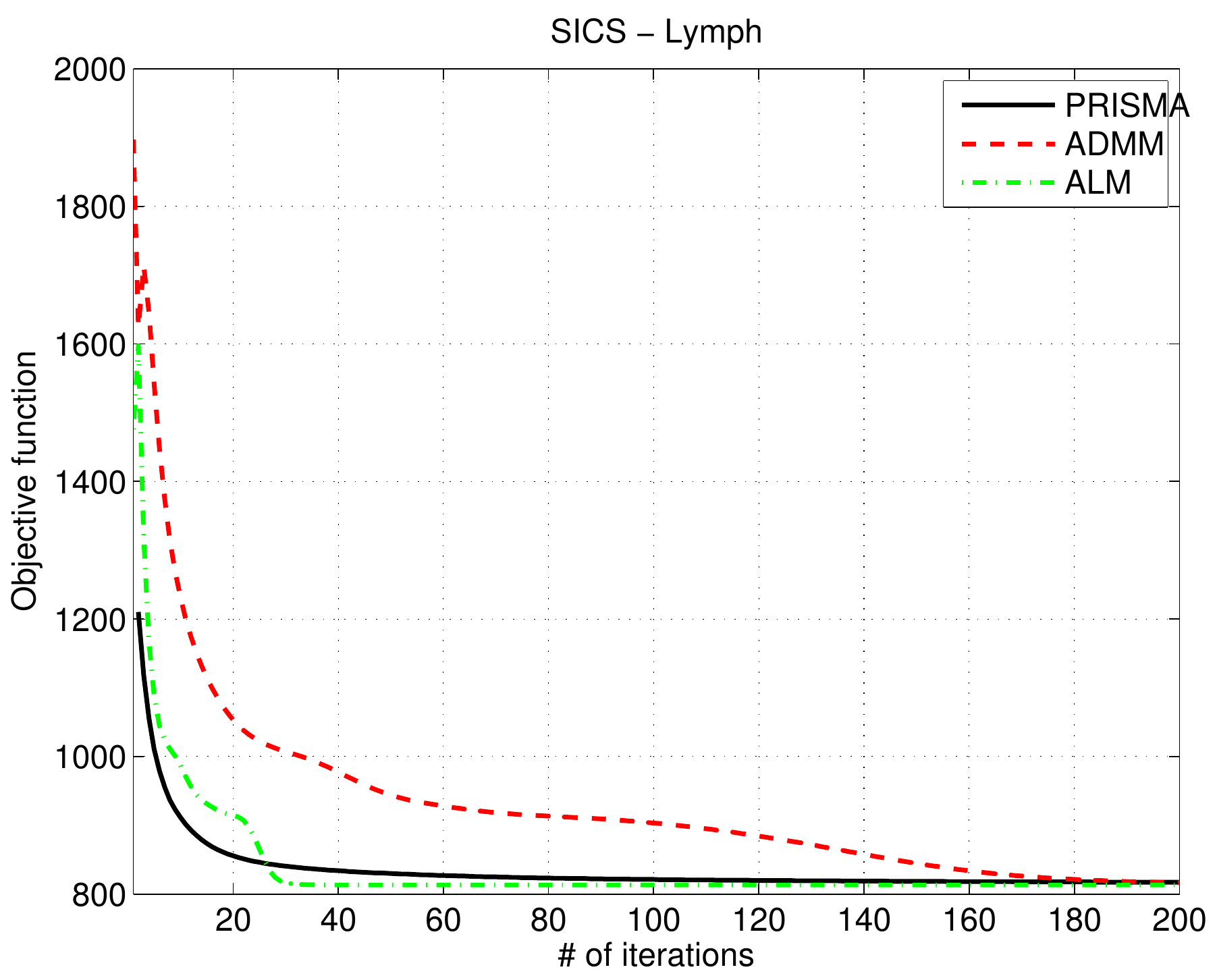}
\caption{Comparison of the objective values as a function of the iterations.}
\label{fig:sics}
\end{figure} 

\begin{figure}[h]
  \centering \footnotesize
  \includegraphics[width=0.60\linewidth]{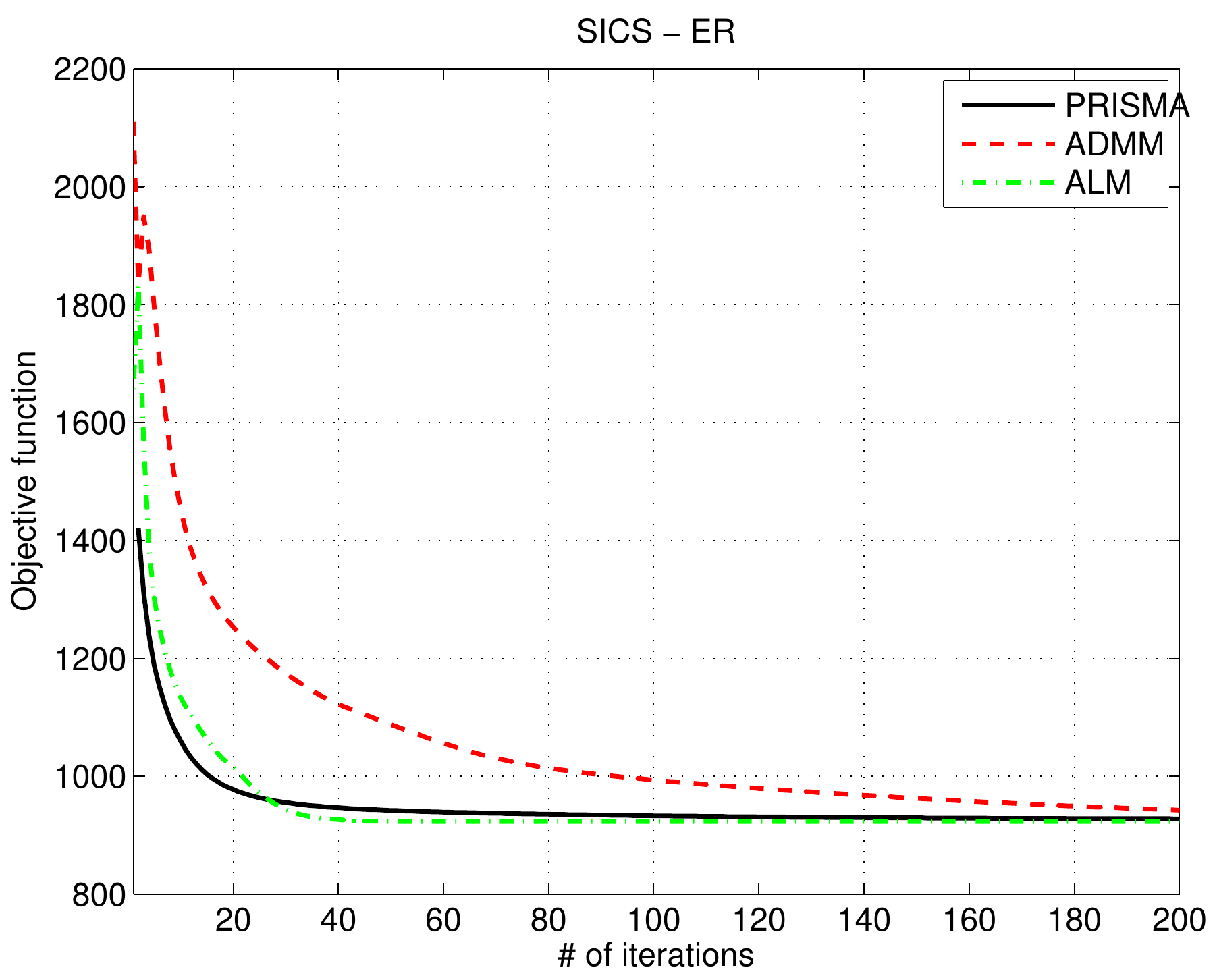}
\caption{Comparison of the objective values as a function of the iterations.}
\label{fig:sics2}
\end{figure}

\begin{figure}[h]
  \centering \footnotesize
  \includegraphics[width=0.60\linewidth]{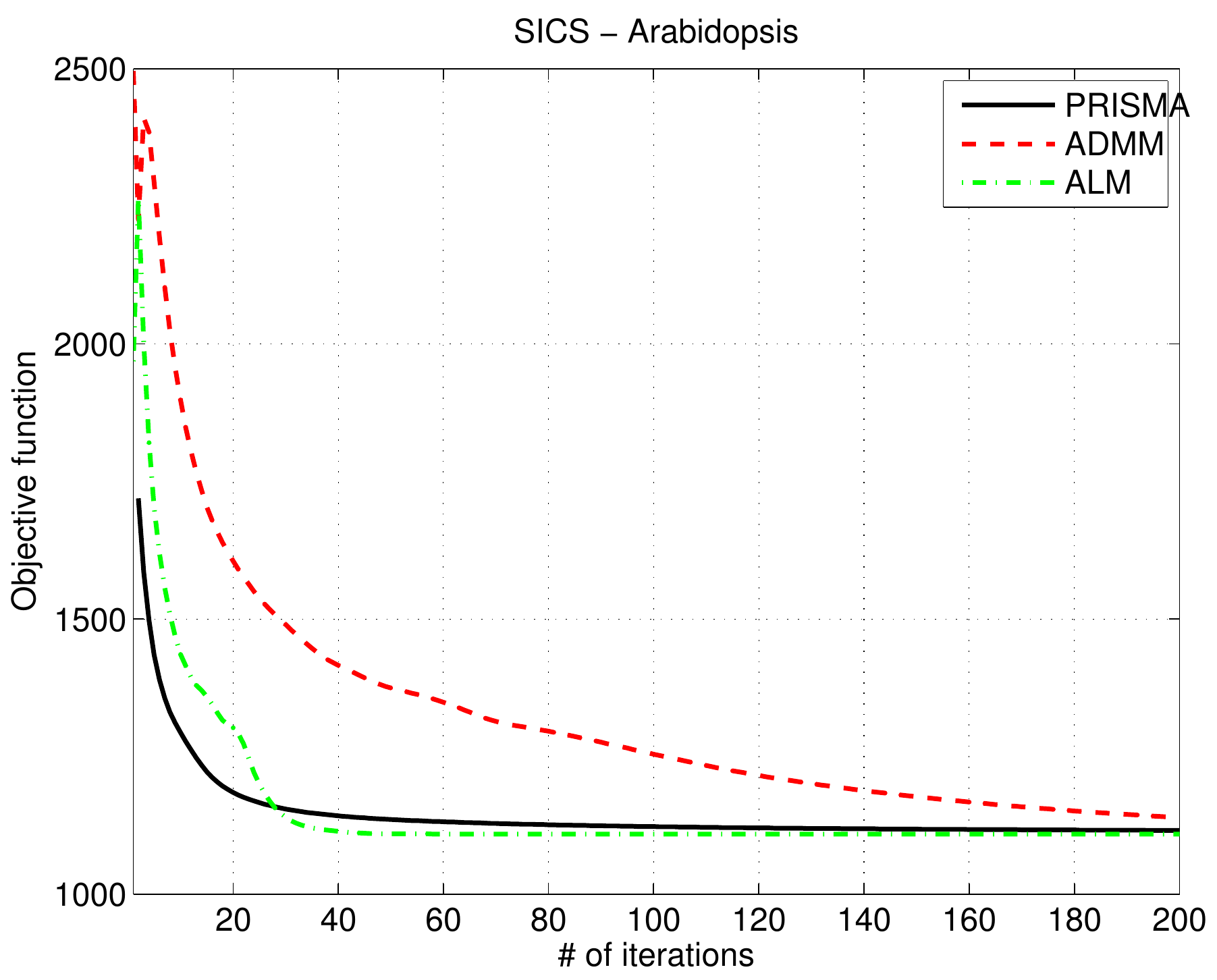}
\caption{Comparison of the objective values as a function of the iterations.}
\label{fig:sics3}
\end{figure} 

\begin{figure}[h]
  \centering \footnotesize
  \includegraphics[width=0.60\linewidth]{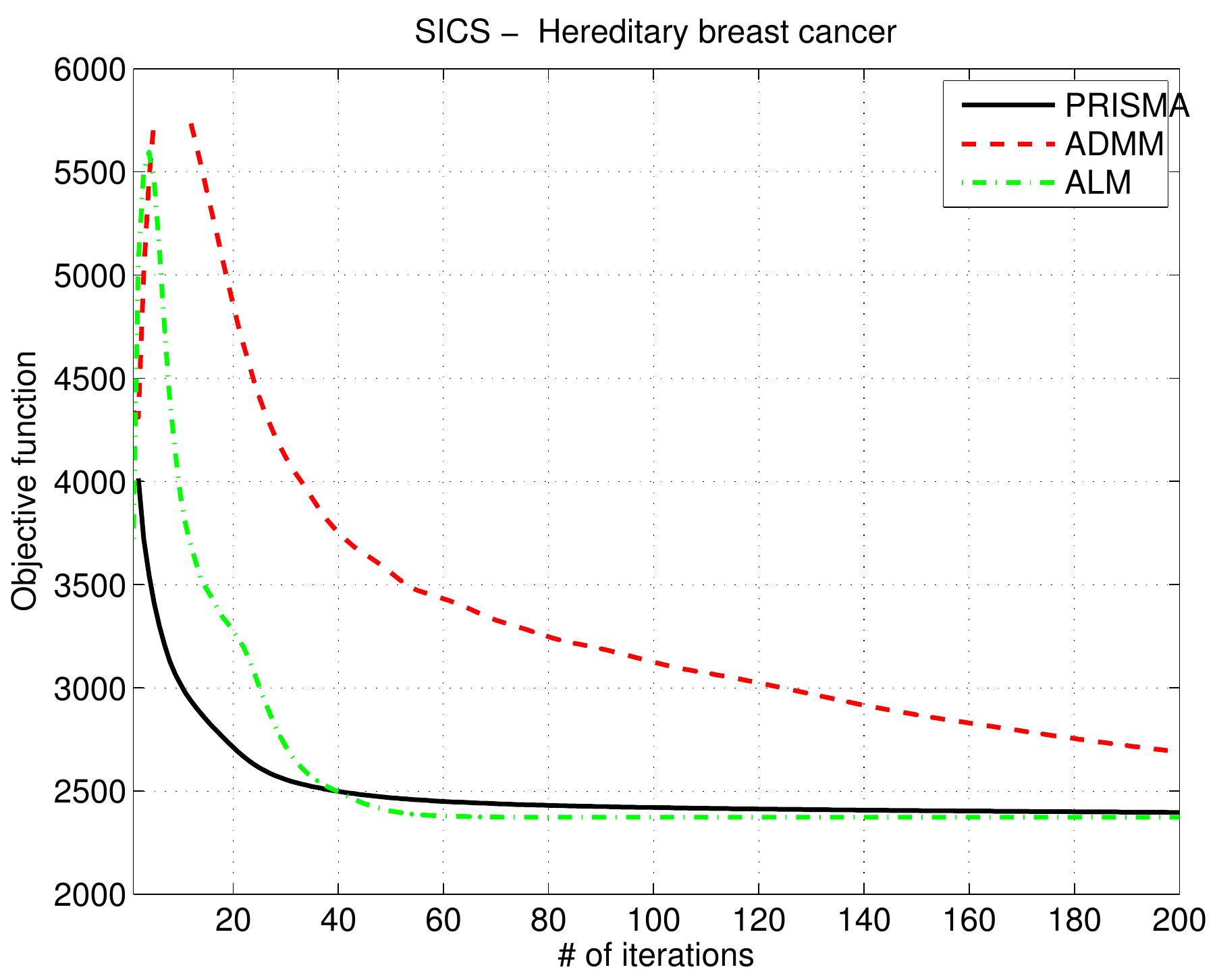}
\caption{Comparison of the objective values as a function of the iterations.}
\label{fig:sics4}
\end{figure}

\begin{figure}[h]
  \centering \footnotesize
  \includegraphics[width=0.60\linewidth]{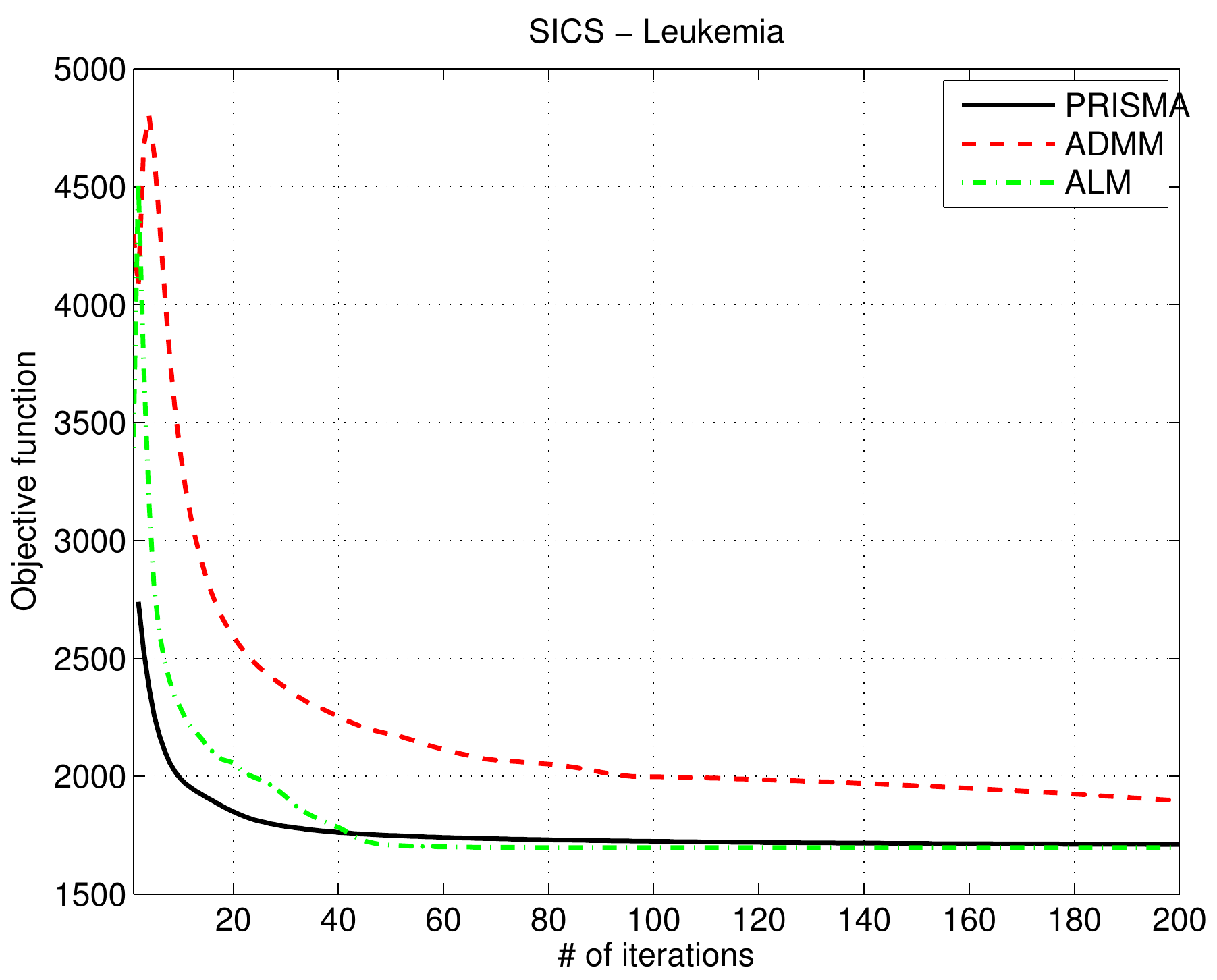}
\caption{Comparison of the objective values as a function of the iterations.}
\label{fig:sics5}
\end{figure}

\begin{table}[h]
\centering \footnotesize
\begin{tabular}{|c||c|c|}  \hline
       & Hereditary breast cancer &  Lymph        \\ \hline
PRISMA & $11.74\pm0.75$            & $0.33\pm0.02$ \\ \hline
ALM    & $7.5\pm0.09$             & $0.27\pm0.05$ \\ \hline
ADMM   & $13\pm0.23$            & $0.47\pm0.06$ \\ \hline
\end{tabular}
\caption{Time (in seconds) per step, average $\pm$ standard deviation.}
\label{table:time_sics}
\end{table}


\begin{table}[h]
\centering \footnotesize
\begin{tabular}{|c||c|c|c|}  \hline
       & Estrogen receptor & Arabidopsis thaliana & Leukemia      \\ \hline
PRISMA & $0.72\pm0.04$     & $1.15\pm0.09$        & $3.61\pm0.19$ \\ \hline
ALM    & $0.45\pm0.03$     & $0.75\pm0.04$        & $2.46\pm0.05$ \\ \hline
ADMM   & $0.80\pm0.02$     & $1.40\pm0.05$        & $4.15\pm0.09$\\ \hline
\end{tabular}
\caption{Time (in seconds) per step, average $\pm$ standard deviation.}
\label{table:time_sics2}
\end{table}

%
%

\section{Discussion and Future work}

We have proposed PRISMA, a new algorithm which can be used to solve many convex nonsmooth optimization problems
in machine learning, signal processing and other areas. PRISMA belongs to the broad family of
proximal splitting methods and extends in different ways forward-backward splitting, accelerated proximal,
smoothing and excessive gap methods. PRISMA is distinguished from these methods by its much wider 
applicability, which allows for solving problems such as basis pursuit or semidefinite programs such as 
max norm matrix completion.

We have validated our method with experiments on matrix completion,
robust PCA problems, and sparse inverse covariance selection, showing
that a simple to code implementation of PRISMA can handle large scale
problems and outperforms or be equal in efficiency to current state of
the art optimization algorithms.  But PRISMA can be useful also for
other learning-related problems.

For example, Basis Pursuit (BP)~\cite{basis_pursuit,kernel_bp} can also be
solved with PRISMA. The BP optimization problem is
formulated as
\begin{align}
\min \ \left\{\|x\|_1 : Ax = b , x\in \R^d \right\},
\end{align}
where $A\in \R^{m \times d}, b \in \R^m$ are prescribed input/output
data. In most applications, the sample size is much smaller than the
dimensionality, $m\ll d$. This problem is of the form \eqref{eq:opt}
with the choice $f=0, g=\|\cdot\|_1, h = \delta_\calA$, where $\calA =
\{ x\in\rd : Ax = b\}$.  
BP can be rephrased as a linear program, which is then solved
with standard approaches (simplex, interior point methods, etc.), or
it can be solved with ADMM, but in both cases no complexity results
are known.
It is easy to see that, for every $x\in\R^d$, its projection
on $\calA$ can be written as $\proj_{\calA}(x) = x-A\trans z$, where
$z$ is any solution of $A A\trans z = A x -b$, hence we can use PRISMA to solve this
problem. The complexity of the projection step is $\scO(d\,m)$, since $m\ll d$.
It would be interesting to see whether PRISMA would indeed yield empirical
advantages here, as well as in varied other learning and optimization
problems.


\bibliographystyle{plainnat}
\bibliography{lip_comp}

\end{document}